\newtheorem{thm}{Theorem}
\newtheorem{prop}{Proposition}
\newtheorem{lem}{Lemma}
\newtheorem{cor}{Corollary}
\theoremstyle{remark}
\newtheorem{rem}{Remark}
\theoremstyle{definition}
\newtheorem{ex}{Example}
\newcommand{\C}{\mathbb{ C}}
\newcommand{\OO}{\mathcal{ O}}
\newcommand{\PP}{\mathbb{ P}}
\newcommand{\R}{\mathbb{ R}}
\newcommand{\Z}{\mathbb{ Z}}
 \DeclareMathOperator{\dummygg}{\mathfrak{g}}
\renewcommand{\gg}{\dummygg}
\DeclareMathOperator{\hh}{\mathfrak{h}} \DeclareMathOperator{\TT}{\mathfrak{t}}
\title{Chern numbers and the geometry of partial flag manifolds}
\author{D.~Kotschick}
\address{Mathematisches Institut, Ludwig-Maximilians-Universit\"at
M\"unchen, Theresienstr.~39, 80333 M\"unchen, Germany}
\email{dieter@member.ams.org}
\author{S.~Terzi\'c}
\address{Faculty of Science, University of Montenegro, Cetinjski put bb, 81000 Podgorica, Montenegro}
\email{sterzic@cg.ac.yu}
\date{February 9, 2008; MSC 2000: primary 53C30, 57R20; secondary 14M15, 53C26, 53C55}
\begin{document}

\begin{abstract}
We calculate the Chern classes and Chern numbers for the natural almost Hermitian structures of the partial flag 
manifolds $F_n=SU(n+2)/S(U(n)\times U(1)\times U(1))$. For all $n>1$ there are two invariant complex algebraic 
structures, which arise from the projectivizations of the holomorphic tangent and cotangent bundles of $\C P^{n+1}$. 
The projectivization of the cotangent bundle is the twistor space of a Grassmannian considered as a quaternionic 
K\"ahler manifold. There is also an invariant nearly K\"ahler structure, because $F_n$ is a $3$-symmetric space. 
We explain the relations between the different structures and their Chern classes, and we prove that $F_n$ is not 
geometrically formal.
\end{abstract}

\maketitle

\section{Introduction}\label{s:intro}

This paper discusses the geometry of the homogeneous spaces
$$
F_n = SU(n+2)/S(U(n)\times U(1)\times U(1))
$$
from several points of view. These flag manifolds carry a number of interesting structures that we would like to understand. 
The relations between the different structures are quite intriguing. Note that $F_0$ is the $2$-sphere, and everything we will
say is either trivial or does not apply in this case. Next, $F_1$ is the manifold of complete flags in $\C^3$, of real dimension $6$.
This plays a special r\^ole in our discussion. The general case begins with $F_2$, of real dimension $10$. All $F_n$ with $n \geq 2$
are genuine partial flag manifolds.

We now briefly describe the different geometric features of $F_n$ that we shall consider.

\subsection{Complex structures}

It is a classical fact due to Borel, Koszul and Wang that $F_n$ admits at least one invariant K\"ahler structure. The 
starting point of this work was an observation of Borel and Hirzebruch~\cite{BH}, pointing out that $F_2$ has two different 
invariant structures as a complex projective variety, for which the values of the Chern number $c_1^5$ are different.
Extending this observation, we shall see that, up to conjugation and automorphisms, each $F_n$ with $n\geq 2$ has 
precisely two invariant complex structures. We shall give explicit formulae for their Chern classes and indicate how to 
calculate the Chern numbers in several different ways. As a particular application of these calculations we will see that 
the value of the Chern number $c_1^{2n+1}$ always distinguishes the two structures. 
For $n\leq 3$ we give the values of all the Chern numbers for the two complex structures.

One way of calculating the Chern numbers is through Lie theory, using the description of Chern classes as polynomials 
in the roots due to Borel and Hirzebruch~\cite{BH}. Another way, also used by Hirzebruch in his recent paper~\cite{H05}, 
is to look for a geometric interpretation of the complex structures on $F_n$, and to perform the calculations using differential 
or algebraic geometry. This works out very nicely because the two complex structures on $F_n$ are precisely those of the 
projectivizations of the holomorphic tangent and cotangent bundles of $\C P^{n+1}$. Moreover, the projectivization of the 
cotangent bundle carries a tautological complex contact structure,
and this identifies it with the total space of a certain $S^2$-bundle over the Grassmannian
$G_n = SU(n+2)/S(U(n)\times U(2))$, as first observed by Wolf~\cite{Wolf}.
With hindsight the Grassmannian is a quaternionic K\"ahler manifold in the sense of Salamon~\cite{S}, and the $S^2$-bundle 
over it is its twistor space. This relates our calculations of Chern numbers for the projectivization of the cotangent
bundle of $\C P^{n+1}$ to earlier calculations of the indices of certain elliptic operators on $G_n$, cf.~\cite{SW}.

Our initial motivation for the calculations of Chern numbers of the complex structures on $F_n$ was Hirzebruch's 
problem asking which linear combinations of Chern numbers are topological invariants of smooth projective varieties 
or of compact K\"ahler manifolds. This problem, originally raised in~\cite{Hir1}, was recently resolved completely in 
complex dimensions strictly smaller than $5$, see~\cite{Chern}, and we hope that the calculations performed in this
paper will be useful in studying this problem in higher dimensions. The calculations in complex dimension $5$, that 
is for $F_2$, summarized in the table in Figure~\ref{table2} might lead one to speculate about what happens for 
arbitrary $n$. In order to test such speculations we completed all the calculations for $n=3$, that is in complex dimension 
$7$. They are summarized in the table in Figure~\ref{table3} at the end of the paper. We also give closed formulae for
a few Chern numbers for arbitrary $n$ in Theorems~\ref{t:C} and~\ref{t:dualC}. 
Nevertheless, we do not pursue the applications to Hirzebruch's problem here.

\subsection{Generalized symmetric spaces and geometric formality}

According to Gray~\cite{G}, compare also~\cite{T0}, every $F_n$ endowed with the normal homogeneous metric induced
by the Killing form is a $3$-symmetric space. Generalizing the definition of symmetric spaces, this means that for every 
$p\in F_n$ there is a globally defined isometry $\theta\colon F_n\rightarrow F_n$ having $p$ as an isolated fixed point and 
satisfying $\theta^3 = Id$. 
More general $k$-symmetric spaces are defined in the same way by requiring $\theta$ to  be of order $k$.

A closed manifold is called geometrically formal if it admits a Riemannian metric for which all
wedge products of harmonic forms are harmonic; cf.~\cite{K}. Compact symmetric spaces provide examples of geometrically
formal manifolds because the harmonic forms for an invariant metric are precisely the invariant forms. This is no longer
true for $k$-symmetric spaces with $k>2$. In~\cite{KT} we showed that the structure of the cohomology ring of 
many $k$-symmetric spaces of the form $G/T$, where $T\subset G$ is a torus, is incompatible with geometric
formality. We will generalize the arguments from~\cite{KT}, which in particular showed that $F_1$ is not geometrically formal, 
to show that $F_n$ is not geometrically formal for all $n \geq 1$. Thus no Riemannian metric on $F_n$ has the property that
the harmonic forms are a subalgebra of the de Rham algebra. For invariant metrics this is not hard to see, and is of interest
in the context of Arakelov geometry, cf.~\cite{KuTa}.

\subsection{Nearly K\"ahler structures}\label{ss:nK}

The order $3$ symmetry $\theta$ of the normal homogeneous metric $g$ on $F_n$ can be identified with an automorphism
of $G=SU(n+2)$ fixing the subgroup $H=S(U(n)\times U(1)\times U(1))$. The derivative 
of $\theta$, also denoted $\theta$, acts as an automorphism of the Lie algebra $\gg$ with fixed point set $\hh$.
Although $\theta - Id$ is not invertible on $\gg$, it is invertible on $T_p F_n = \gg/\hh$. Therefore,
$$
0 = \theta^3 - Id = (\theta - Id)(\theta^2 + \theta + Id)
$$
implies $\theta^2 + \theta + Id=0$ on $T_p F_n = \gg/\hh$. Now, on $T_p F_n = \gg/\hh$, one can define
$$
J_{\theta} = \frac{1}{\sqrt 3}(Id + 2\theta) \ .
$$
This is an isometry of $g$ satisfying $J_{\theta}^2 = -Id$, as follows immediately from $\theta^2 + \theta + Id=0$.
Thus $J_{\theta}$ is an almost complex structure and $(g,J_{\theta})$ is an almost Hermitian structure called the 
canonical almost Hermitian structure of the $3$-symmetric space.

Gray~\cite{G} proved that the canonical almost complex structure of a $3$-symmetric space 
is nearly K\"ahler, i.~e.
$$
(\nabla_v J_{\theta})v = 0
$$
for all vector fields $v$, where $\nabla$ denotes the Levi-Civit\`a connection of $g$. 
Conversely, Butruille~\cite{Bu} recently proved that every homogeneous nearly K\"ahler structure that is 
not K\"ahler comes from a $3$-symmetric space.

The nearly K\"ahler structure of a $3$-symmetric space is K\"ahler, meaning
$\nabla_v J_{\theta}=0$ for all $v$, if and only if it is Hermitian symmetric. 
As $F_n$ is not a symmetric space for any $n\geq 1$, its nearly K\"ahler structure can not be K\"ahler. We will
see that the $J_{\theta}$ defined above is the unique (up to conjugation) non-integrable invariant almost complex 
structure on $F_n$. Moreover, this structure is a special case of non-integrable almost complex structures on
twistor spaces introduced by Eells and Salamon~\cite{ES}; compare also~\cite{Bianchi,AGI,Nagy1}.
We shall compute its Chern classes and compare the Chern numbers to those of the integrable 
K\"ahlerian structures. See Figures~\ref{table2} and~\ref{table3} for a summary of these calculations for 
$n=2$ and $3$.


\begin{figure}
{
\begin{tabular}{|l|c|c|c|} \hline
& & & \\ 
& \raisebox{1.5ex}[-1.5ex]{standard structure $\PP (T\C P^3)$} & \raisebox{1.5ex}[-1.5ex]{twistor space $\PP (T^{*}\C P^3)$} & \raisebox{1.5ex}[-1.5ex]{nearly K\"ahler structure} \\ \hline\hline
{\rule [-3mm]{0mm}{8mm}$c_1^5$ } 
& $4500$ & $4860$ & $-20$  \\ \hline
{\rule [-3mm]{0mm}{8mm}$c_1^3c_2$ } 
& $2148$ & $2268$ & $-4$  \\ \hline
{\rule [-3mm]{0mm}{8mm}$c_1c_2^2$ } 
& $1028$ & $1068$ & $-4$ \\ \hline
{\rule [-3mm]{0mm}{8mm}$c_1^2c_3$ } 
& $612$ & $612$ & $20$ \\ \hline
{\rule [-3mm]{0mm}{8mm}$c_2c_3$ } 
& $292$ & $292$ & $4$ \\ \hline
{\rule [-3mm]{0mm}{8mm}$c_1c_4$ } 
& $108$ &  $108$ & $12$ \\ \hline
{\rule [-3mm]{0mm}{8mm}$c_5$ } 
& $12$ & $12$ & $12$ \\ \hline
\end{tabular}
}
\caption{The Chern numbers of the invariant almost Hermitian structures on $F_2$}\label{table2}
\end{figure}


\subsection{Einstein metrics}

It is well known that the six-dimensional flag manifold $F_1$ has exactly two invariant Einstein metrics, up to scale
and isometry; see for example~\cite{Ar,BK,Kimura}. One of these is K\"ahler--Einstein, compatible with the essentially
unique integrable complex structure, and the other one is non-K\"ahler, but almost Hermitian. This is the normal metric, 
which, as explained above, is  nearly K\"ahler because it is $3$-symmetric. 

For $n\geq 2$, there are precisely three invariant Einstein metrics on
$F_n$, up to scale and isometry. This is due to Arvanitoyeorgos~\cite{Ar} and Kimura~\cite{Kimura}. Two of the three
Einstein metrics are K\"ahler--Einstein, compatible with the two different invariant complex structures. The third Einstein
metric, which is not K\"ahler, is not the normal nearly K\"ahler metric. In~\cite{G}, Gray had claimed that the normal metric 
of any $3$-symmetric space is Einstein, but, in~\cite{G2}, he himself corrected this, and mentioned that the normal 
metric of $F_2$ is not Einstein. It is a result of Wang and Ziller~\cite{zillerENS} that the normal metric on $F_n$ is 
not Einstein for all $n\geq 2$.

Both the invariant nearly K\"ahler metric and the invariant Einstein non-K\"ahler metric on $F_n$ are 
obtained from the K\"ahler--Einstein metric of the twistor space by scaling the $S^2$ fibers, but the scaling factor
is different for the two metrics; see~\cite{AGI}.

\subsection*{Outline of the paper}

In Section~\ref{s:Lie} we recall some facts from the theory developed by Borel and Hirzebruch in~\cite{BH}
and apply them to determine the invariant almost complex structures of $F_n$ and discuss their integrability.
In Section~\ref{s:ChernLie} we calculate Chern classes and Chern numbers for these structures using Lie
theory.

In Section~\ref{s:cx} we discuss the complex and the nearly K\"ahler structures of $F_n$ without using Lie theory.
Our point of view here is complementary to that of Section~\ref{s:Lie}, and relies on the work of Salamon and his coauthors~\cite{S,Bianchi,ES,LeS}; compare also~\cite{AGI,Besse,Nagy1}.
This section, and Sections~\ref{s:coho} to~\ref{ss:Chernnearly} which are based on it, can be read
independently of Sections~\ref{s:Lie} and~\ref{s:ChernLie}, except for a few isolated remarks aimed at relating the two points of
view. In Section~\ref{s:coho} we give a simple description of the cohomology ring of $F_n$ and use it to prove a 
general result about Hodge and Chern numbers for arbitrary K\"ahlerian complex
structures on manifolds with this cohomology ring, and we also prove that $F_n$ is not geometrically formal.
Sections~\ref{s:standard} to~\ref{ss:Chernnearly} contain calculations of Chern numbers for the three different
almost Hermitian structures.

In Section~\ref{s:final} we comment on the relations between the different points of view.

\section{The Lie theory of generalized flag manifolds}\label{s:Lie}

The partial flag manifolds $F_n$ are a special subclass of the so-called generalized flag manifolds, which are 
homogeneous spaces of the form $G/H$, with $G$ a compact connected semisimple Lie group and $H\subset G$
a closed subgroup of equal rank that is the centralizer of a torus. For such generalized flag manifolds 
the cohomology ring, the invariant almost complex structures and their Chern classes, their integrability, 
and the invariant Einstein metrics can be described explicitly in the framework of the theory initiated by Borel
and Hirzebruch~\cite{BH}; see also~\cite{Besse,T,zillerENS,W-G}. We recall some aspects of this theory
relevant to our calculations of Chern numbers on $F_n$. For a different point of view on some of these matters, the 
reader may consult~\cite{MN}.

\subsection{Some general theory}

For a compact homogeneous space $G/H$ as above one has the isotropy representation of $H$ on
$T_{eH}(G/H)$, which can be decomposed into a direct sum of irreducible summands. By Schur's lemma 
a $G$-invariant metric on $G/H$ restricts to each of the irreducible summands as a constant multiple of 
the Killing form. Conversely, any choice of positive-definite multiples of the Killing form for each irreducible 
summand uniquely specifies a $G$-invariant metric on $G/H$. The determination of invariant Einstein
metrics in~\cite{zillerENS,Kimura,Ar} proceeds by solving--if possible--the algebraic system for the multiples 
of the Killing form given by the equation making the Ricci tensor proportional to the metric. For example, if 
the isotropy representation is irreducible, then the normal homogeneous metric given by the Killing form is 
Einstein, and is the only invariant Einstein metric. For the partial flag manifolds $F_n$ the isotropy representation 
decomposes into the direct sum of three irreducible summands, one of real dimension $2$ and two of real 
dimension $2n$, see~\eqref{eq:isotropy} below. For $F_1$ the method of Wang and Ziller~\cite{zillerENS} 
yields exactly two non-isometric non-homothetic invariant Einstein metrics, and for $F_n$ with $n\geq 2$ it 
yields three; compare~\cite{Ar,Kimura}.

The invariant almost complex structures on generalized flag manifolds can be enumerated in the same way, 
as they correspond to complex structures on the vector space $T_{eH}(G/H)$ invariant under the isotropy 
representation. Note that every generalized flag manifold admits an invariant complex structure, as its isotropy 
subgroup is the centralizer of a torus. Therefore, again by Schur's lemma, any isotropy invariant complex 
structure in $T_{eH}(G/H)$ is unique up to conjugation on every irreducible summand of the isotropy
representation. Thus, if the isotropy representation decomposes into $p$ irreducible summands, each admitting 
a complex structure, then the number of invariant almost complex structures on $G/H$ is $2^p$. If we identify
complex conjugate structures this leaves $2^{p-1}$ invariant almost complex structures, but some of these may
still be equivalent under automorphisms of $G$. For the partial flag manifolds $F_n$ we have $p=3$, so up 
to conjugation there are always $2^{3-1}=4$ invariant almost complex structures. However, it will turn out that 
after taking into account automorphisms we are left with only $3$ almost complex structures for $n\geq 2$.
For $n=1$ two of the three are equivalent under an additional automorphism that is not present in the general case.

This enumeration of invariant almost complex structures is too crude to determine which ones are integrable, and 
for the calculation of Chern classes. Following Borel and Hirzebruch~\cite{BH} one deals with these two points 
using the roots of the Lie algebra $\gg$ with respect to a Cartan subalgebra.

Let $\TT\subset\gg$ be a Cartan subalgebra for $\gg$ and $\hh$ . This gives rise to
a root space decomposition
$$
\gg = \hh\oplus\gg_{\pm\beta_1}\oplus\ldots\oplus\gg_{\pm\beta_k} \ ,
$$
where the $\pm\beta_i$ are the complementary roots for $\hh\subset\gg$. Note that $T_{eH}(G/H)$ is 
identified with $\gg_{\pm\beta_1}\oplus\ldots\oplus\gg_{\pm\beta_k}$. 

Now any isotropy invariant complex structure on $T_{eH}(G/H)$ is also invariant under the adjoint representation
of a maximal torus, and therefore induces a complex structure on each root space $\gg_{\pm\beta_j}$. Comparing
this orientation on $\gg_{\pm\beta_j}$ with the orientation given by the adjoint representation, one assigns a sign 
$\pm 1$ to $\gg_{\pm\beta_j}$. Note, further, that each irreducible summand of the isotropy representation is a sum 
of some of these root spaces. Therefore, invariant almost complex structures on $G/H$ are specified by choices of 
signs for the complementary roots compatible with the irreducible summands of the isotropy representation.

The following three lemmata are due to Borel and Hirzebruch~\cite{BH}.
\begin{lem}\label{l:integrable}{\rm (\cite{BH}, 13.7)}
An invariant almost complex structure is integrable if and only if one can find an ordering on the coordinates for the  
Cartan algebra such that its corresponding system of complementary roots is positive and closed in the sense that 
whenever $\alpha$ and $\beta$ are complementary roots and $\alpha+\beta$ is a root, then $\alpha+\beta$ is a 
complementary root.
\end{lem}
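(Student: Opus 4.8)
The plan is to complexify everything and to translate integrability into a purely Lie-theoretic closedness condition, for which the Newlander--Nirenberg theorem is the essential input. Write $\mathfrak{m}=T_{eH}(G/H)$, so that $\mathfrak{m}^{\C}=\bigoplus_j(\gg^{\C}_{\beta_j}\oplus\gg^{\C}_{-\beta_j})$, and let $J$ be an invariant almost complex structure. Since $J$ commutes with the isotropy action of the maximal torus, it preserves each root line $\gg^{\C}_{\beta}$, so its $(+i)$-eigenspace has the form $\mathfrak{m}^{1,0}=\bigoplus_{\beta\in\Phi}\gg^{\C}_{\beta}$, where $\Phi$ is a choice of one complementary root out of each pair $\pm\beta_j$; this $\Phi$ is exactly the ``system of complementary roots'' attached to $J$, and $\mathfrak{m}^{0,1}=\overline{\mathfrak{m}^{1,0}}=\bigoplus_{\beta\in\Phi}\gg^{\C}_{-\beta}$. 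Set $\mathfrak{p}=\hh^{\C}\oplus\mathfrak{m}^{1,0}$.

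First I would reduce integrability to an algebraic statement. By the Newlander--Nirenberg theorem $J$ is integrable precisely when the $(1,0)$ distribution is involutive, and because $J$ is $G$-invariant this is equivalent to the algebraic condition $[\mathfrak{m}^{1,0},\mathfrak{m}^{1,0}]\subseteq\hh^{\C}\oplus\mathfrak{m}^{1,0}$, i.e.\ to $\mathfrak{p}$ being a complex subalgebra of $\gg^{\C}$. The inclusions $[\hh^{\C},\hh^{\C}]\subseteq\hh^{\C}$ and $[\hh^{\C},\mathfrak{m}^{1,0}]\subseteq\mathfrak{m}^{1,0}$ hold automatically, the first because $\hh$ is a subalgebra and the second because $\mathfrak{m}^{1,0}$ is $\ad\hh$-stable by isotropy invariance of $J$. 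Hence integrability is equivalent to $[\mathfrak{m}^{1,0},\mathfrak{m}^{1,0}]\subseteq\mathfrak{p}$, and since $[\gg^{\C}_{\alpha},\gg^{\C}_{\beta}]\subseteq\gg^{\C}_{\alpha+\beta}$ this unwinds, in root language, to the following: for all $\alpha,\beta\in\Phi$ with $\alpha+\beta$ a root one has $\alpha+\beta\notin-\Phi$, the case $\alpha+\beta=0$ being excluded because $\Phi\cap(-\Phi)=\emptyset$.

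For the direction $(\Leftarrow)$ I would argue as follows. Suppose an ordering of the coordinates on $\TT$ makes $\Phi$ the set of positive complementary roots and $\Phi$ closed. If $\alpha,\beta\in\Phi$ and $\alpha+\beta$ is a root, then by closedness $\alpha+\beta$ is complementary, and being a sum of positive roots it is positive, hence $\alpha+\beta\in\Phi$; in particular $\alpha+\beta\notin-\Phi$. The algebraic criterion from the previous step then gives integrability.

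The hard direction is $(\Rightarrow)$, and this is where I expect the main obstacle. Assume $J$ is integrable, so that $\mathfrak{p}$ is a subalgebra; equivalently the set of roots $R_{\mathfrak{p}}=R_H\cup\Phi$ occurring in $\mathfrak{p}$ is closed under addition, where $R_H\subseteq R$ denotes the symmetric set of roots of $\hh$ inside the root system $R$ of $\gg$. Because $R_H$ is symmetric and $\Phi\sqcup(-\Phi)$ exhausts the complementary roots, one has $R_{\mathfrak{p}}\cup(-R_{\mathfrak{p}})=R$, so $R_{\mathfrak{p}}$ is a \emph{parabolic} subset of the root system and $\mathfrak{p}$ is a parabolic subalgebra with Levi factor $\hh^{\C}$. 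I would now use that $H$ is the centralizer of a torus $S$, with $\s=\operatorname{Lie}(S)\subseteq\TT$: the complementary roots are exactly those not vanishing on $\s$, while the roots of $\hh$ vanish on $\s$. The nilradical roots $\Phi$ of the parabolic $\mathfrak{p}$ are precisely the roots taking positive values on a regular dominant element of the centre of the Levi, which lies in $\s^{\C}$; equivalently there is a linear functional $\ell_1$ on $\s$ with $\Phi=\{\beta\in R:\ell_1(\beta|_{\s})>0\}$. Choosing a generic functional $\ell_2$ on a complement of $\s$ in $\TT$ and ordering by $\ell_1+\varepsilon\ell_2$ for small $\varepsilon>0$ then makes $\Phi$ exactly the positive complementary roots; moreover if $\alpha,\beta\in\Phi$ then $\ell_1((\alpha+\beta)|_{\s})=\ell_1(\alpha|_{\s})+\ell_1(\beta|_{\s})>0$, so $(\alpha+\beta)|_{\s}\neq0$ and $\alpha+\beta$ is complementary whenever it is a root, which is exactly closedness. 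The crux of the whole argument is thus the separation statement that the sign system $\Phi$ of an integrable structure is cut out by a single linear functional on $\s$; I would establish it either by invoking the structure theory of parabolic subalgebras as above, or, more elementarily, by checking directly that the closed set $\Phi$ with $\Phi\cap(-\Phi)=\emptyset$ lies in an open half-space and applying a Stiemke--Minkowski separation argument.
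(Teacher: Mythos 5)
The paper does not prove this lemma at all: it is quoted from (\cite{BH}, 13.7) and used as a black box in the proof of Proposition~\ref{roots}. So your proposal can only be compared with the classical argument, and in those terms its architecture is correct and standard. The reduction of integrability, via Newlander--Nirenberg and $G$-invariance, to the condition that $\mathfrak{p}=\hh^{\C}\oplus\mathfrak{m}^{1,0}$ be a subalgebra of $\gg^{\C}$ is the accepted criterion, and you correctly isolate the genuine content of the forward direction: integrability alone only gives $\alpha+\beta\in R_H\cup\Phi$ for $\alpha,\beta\in\Phi$, so one must both exclude $\alpha+\beta\in R_H$ and manufacture an ordering, and both follow once one has a linear functional vanishing on $R_H$ and positive on $\Phi$.

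There are, however, two inaccuracies, both repairable. First, your ``crux'' claim that this functional can be taken to live on $\s$ is false for an arbitrary torus $S$ whose centralizer is $H$: take $G=SU(3)$ and $S$ a generic one-dimensional subtorus of the maximal torus $T$, so that $H=T$ and $R_H=\emptyset$; then all six positive systems of $A_2$ arise from integrable invariant structures, but only two of them have the form $\{\beta\in R\colon\ell_1(\beta|_{\s})>0\}$, because $\dim\s=1$. The point is that the center of the Levi contains $\s^{\C}$ but is in general strictly larger. The repair is easy: either normalize $S$ to be the identity component of the center of $H$ (always possible, and then $\s^{\C}$ really is the center of the Levi), or simply use the functional $\ell$ on all of $\TT$ that the structure theory of parabolic subsets provides; your perturbation $\ell+\varepsilon\ell_2$ and the exclusion of $\alpha+\beta\in R_H$ only require $\ell$ to vanish on $R_H$, not to factor through restriction to $\s$. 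Second, the ``more elementary'' alternative in your last sentence is circular as stated: you speak of ``the closed set $\Phi$'', but closedness of $\Phi$ is not known a priori --- only $R_H\cup\Phi$ is closed, and sums falling into $R_H$ are exactly what is at issue. This too is easily fixed: if $\alpha,\beta\in\Phi$ and $\gamma=\alpha+\beta\in R_H$, then $-\gamma\in R_H$ and $\beta-\gamma=-\alpha$ is a root, so the inclusion $[\hh^{\C},\mathfrak{m}^{1,0}]\subseteq\mathfrak{m}^{1,0}$ (which holds for every invariant almost complex structure by isotropy invariance) forces $-\alpha\in\Phi$, contradicting $\Phi\cap(-\Phi)=\emptyset$. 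With this two-line argument $\Phi$ is closed and disjoint from $-\Phi$, hence contained in some positive system, and the forward direction is complete without any appeal to parabolic subalgebras.
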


\begin{lem}\label{l:Chern}{\rm (\cite{BH}, 10.8)}
For an invariant almost complex structure $J$ its complementary roots $\beta_i$ considered as elements 
of $H^2(G/H)$ are the Chern roots, i.e.~the total Chern class is 
$$
c(T(G/H),J)=\prod_{i=1}^{k}(1+\beta_i) \ .
$$
\end{lem}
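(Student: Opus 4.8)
The plan is to compute everything on the full flag manifold $G/T$, where the complex tangent bundle splits as a direct sum of line bundles indexed by the complementary roots, and then to descend the resulting formula along the projection $\pi\colon G/T\ra G/H$ with fibre $H/T$. The two nontrivial inputs are the identification of roots with first Chern classes of homogeneous line bundles on $G/T$, and the injectivity of $\pi^*$ together with a description of its image.

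First I would realize $(T(G/H),J)$ as a homogeneous complex vector bundle. Writing $\mathfrak{m}=\gg/\hh\cong\gg_{\pm\beta_1}\oplus\ldots\oplus\gg_{\pm\beta_k}$ for the isotropy complement with its $H$-invariant complex structure $J$, one has $(T(G/H),J)=G\x_H(\mathfrak{m},J)$. For each pair $\pm\beta_j$ the structure $J$ singles out the root space that is holomorphic; denoting the corresponding complementary root again by $\beta_j$, the restriction of $(\mathfrak{m},J)$ to the maximal torus $T$ is the sum of weight spaces $\bigoplus_{j=1}^{k}\C_{\beta_j}$. By construction the $\beta_j$ are exactly the $T$-weights of the complex isotropy representation, so they are the classes that should appear as Chern roots.

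Next I would pull back to $G/T$. Since $\pi^*(G\x_H\mathfrak{m})\cong G\x_T\mathfrak{m}$, with $T$ acting through its inclusion in $H$, the pullback splits as a sum of complex line bundles,
$$
\pi^*(T(G/H),J)\cong\bigoplus_{j=1}^{k}L_{\beta_j},\qquad L_{\beta_j}=G\x_T\C_{\beta_j}.
$$
Under the standard identification of roots with elements of $H^2(G/T)$ used in~\cite{BH}, namely Borel's characteristic homomorphism $H^*(BT)\ra H^*(G/T)$, one has $c_1(L_{\beta_j})=\beta_j$. The Whitney sum formula then yields $c\bigl(\pi^*(T(G/H),J)\bigr)=\prod_{j=1}^{k}(1+\beta_j)$ in $H^*(G/T;\Q)$.

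Finally I would descend the formula. The fibration $H/T\ra G/T\xrightarrow{\pi}G/H$ is totally non-cohomologous to zero over $\Q$, so $\pi^*\colon H^*(G/H;\Q)\hra H^*(G/T;\Q)$ is injective with image the subring of $W_H$-invariants. Although an individual complementary root need not lie in this image, the product $\prod_{j}(1+\beta_j)$ is $W_H$-invariant: because $J$ is $H$-invariant, conjugation by $N_H(T)$ preserves the holomorphic subspace $\bigoplus_j\gg_{\beta_j}$, so $W_H$ merely permutes the $\beta_j$. Hence $\prod_j(1+\beta_j)$ descends to a unique class in $H^*(G/H;\Q)$, which is exactly what the statement abbreviates by $\prod_{i=1}^{k}(1+\beta_i)$, and by injectivity this class equals $c(T(G/H),J)$. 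The main obstacle is this last paragraph: the identification of the image of $\pi^*$ with the $W_H$-invariants and the fact that the fibration is totally non-cohomologous to zero are the substantive facts, which I would quote from the Borel theory underlying~\cite{BH}; the remaining steps are formal properties of associated bundles and of Chern classes.
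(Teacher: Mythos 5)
Your proof is correct; note that the paper itself offers no argument for this statement---Lemma~\ref{l:Chern} is quoted directly from~\cite{BH}, 10.8---and what you have written is essentially the classical argument behind that reference: realize $T(G/H)$ as the homogeneous bundle $G\times_H(\gg/\hh,J)$, pull back along $\pi\colon G/T\to G/H$ where it splits into homogeneous line bundles $L_{\beta_j}$, and descend using Borel's theory, namely that $\pi^*$ is injective on rational cohomology with image the $W_H$-invariants (which the $W_H$-invariance of $\prod_j(1+\beta_j)$, coming from the $\Ad(H)$-invariance of $J$, makes applicable). The only step that deserves extra care is the convention $c_1(L_{\beta_j})=\beta_j$: with the opposite (equally common) sign convention for the characteristic homomorphism the same computation yields $\prod_j(1-\beta_j)$, i.e.\ the total Chern class of the conjugate structure, so the orientation convention that assigns signs to the complementary roots (described before Lemma~\ref{l:integrable}) must be matched to the convention defining $c_1$ of the homogeneous line bundles, as it is in~\cite{BH}.
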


\begin{lem}\label{l:Hodge}{\rm (\cite{BH}, 14.10)}
Every invariant integrable almost complex structure makes $G/H$ into a rational projective algebraic manifold 
over $\C$, all of whose cohomology is of Hodge type $(p,p)$.
\end{lem}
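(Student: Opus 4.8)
The plan is to reduce the statement to classical facts about flag varieties of complex semisimple groups, by first upgrading the invariant integrable complex structure to a biholomorphism with a quotient $G^{\C}/P$, where $G^{\C}$ is the complexification of $G$ and $P\subset G^{\C}$ is a parabolic subgroup. By Lemma~\ref{l:integrable}, integrability means that we may choose an ordering for which the system of complementary roots is positive and closed. Using the corresponding positivity, I would form the subspace
$$
\mathfrak p = \hh^{\C}\oplus\bigoplus_{\beta>0}\gg^{\C}_{\beta}\subset\gg^{\C} \ ,
$$
where the sum runs over the positive complementary roots. Since $\hh^{\C}$ already contains the Cartan subalgebra and all the root spaces of $\hh$, the space $\mathfrak p$ consists of the Cartan together with all positive root spaces and the negative root spaces belonging to $\hh$. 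The closedness condition from Lemma~\ref{l:integrable} is precisely what guarantees that $\mathfrak p$ is stable under the bracket, so that it is a parabolic subalgebra with Levi factor $\hh^{\C}$; let $P\subset G^{\C}$ be the associated parabolic subgroup. The first key step is then the theorem of Wang, to the effect that $G$ acts transitively and holomorphically on $G^{\C}/P$ with isotropy exactly $H$, so that the given complex structure on $G/H$ is biholomorphic to $G^{\C}/P$.

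Once this identification is in hand, the three assertions follow from standard structure theory. For projectivity and smoothness I would use that $G^{\C}/P$ is a smooth projective variety: it embeds equivariantly into the projectivization of an irreducible representation of $G^{\C}$ of suitable highest weight, realized as the closed orbit of the highest-weight line. For rationality I would invoke the Bruhat decomposition, whose big cell, the orbit of the unipotent radical of the parabolic opposite to $P$, is an open dense subset isomorphic to an affine space $\C^N$, yielding a birational equivalence $G^{\C}/P\sim\PP^N$.

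For the Hodge type I would again use the Bruhat decomposition, now viewed as a cell decomposition of $G^{\C}/P$ into Schubert cells, each isomorphic to some $\C^{d}$ and hence of even real dimension $2d$. Since there are no odd-dimensional cells, the cohomology is free and concentrated in even degrees, with a $\Z$-basis given by the fundamental classes of the Schubert varieties, the closures of the cells. These Schubert varieties are complex algebraic subvarieties, so a Schubert variety of complex codimension $p$ represents a class in $H^{2p}(G/H)$ of Hodge type $(p,p)$. As these classes span the whole cohomology, every group $H^{2m}(G/H)$ equals $H^{m,m}$, and all cohomology is of type $(p,p)$, as claimed.

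I expect the main obstacle to be the first step, namely the passage from an invariant integrable almost complex structure to the biholomorphism with $G^{\C}/P$: one must check that the closedness condition produces a genuine parabolic subalgebra and that the resulting holomorphic $G$-action is transitive with isotropy exactly $H$. This is where the substantive content lies; the projectivity, rationality and $(p,p)$ statements are then formal consequences of the theory of flag varieties.
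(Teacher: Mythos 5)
There is nothing to compare on the paper's side: Lemma~\ref{l:Hodge} is stated as a quotation of result 14.10 of Borel--Hirzebruch~\cite{BH}, and the paper supplies no proof of its own. Your proposal is, in substance, a correct reconstruction of the classical argument behind that citation: the passage from a positive, closed system of complementary roots to a parabolic subalgebra $\mathfrak{p}\subset\gg^{\C}$; Wang's theorem identifying $G/H$ holomorphically with $G^{\C}/P$; the highest-weight orbit embedding for projectivity; the big Bruhat cell for rationality; and the Schubert-cell decomposition, giving a basis of algebraic cycle classes in even degrees, for the assertion that all cohomology is of type $(p,p)$.

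One step is stated too loosely. The closedness condition of Lemma~\ref{l:integrable} only governs brackets of two complementary root spaces, so it is not by itself ``precisely what guarantees'' that $\mathfrak{p}$ is bracket-stable: you must also check $[\hh^{\C},\gg^{\C}_{\beta}]\subset\mathfrak{p}$ for each chosen complementary root $\beta$, i.e.\ that the chosen system of complementary roots is stable under addition of roots of $\hh$. This is where the \emph{invariance} of the almost complex structure enters: the $(+i)$-eigenspace of $J$ on the complexified tangent space at $eH$ is $\ad(\hh^{\C})$-invariant, being a sum of complexified irreducible isotropy summands with a consistent choice of sign, and this says exactly that if $\alpha$ is a root of $\hh$, $\beta$ is chosen, and $\alpha+\beta$ is a root, then $\alpha+\beta$ is again chosen. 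With this supplement, $\mathfrak{p}$ contains the Borel subalgebra determined by the chosen ordering, hence is parabolic, and the rest of your argument goes through as described; for the $(p,p)$ statement one should also recall that fundamental classes of possibly singular Schubert varieties are still Hodge classes (for instance via Bott--Samelson resolutions), which is the standard fact your last paragraph implicitly uses.
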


\begin{rem}\label{equivalence}
It is also proved in~\cite{BH}, 13.7, that if for two invariant complex structures on $G/H$ there is an automorphism of 
the Cartan algebra $\TT$ which carries the root system of one structure into that of the other structure and fixes the 
root system of $H$, then these two structures are equivalent under an automorphism of $G$ fixing $H$. 
\end{rem}

\subsection{Application to the partial flag manifolds $F_n$}\label{ABH}

We now specialize this general discussion to the consideration of $F_n$ with $G=SU(n+2)$ and 
$H=S(U(1)\times U(1)\times U(n))$. At the level of Lie algebras this means that we consider 
$A_{n+1}/(\TT^2\oplus A_{n-1})$, with the specific embedding of  the subalgebra given by the 
$3$-symmetric structure, see for example~\cite{W-G,T}.

\begin{prop}
The cohomology ring of $F_n$ is 
\begin{equation}\label{cohom}
H^{*}(F_n)= \left(\R [x,y,\sum _{i=1}^{n}y_{i}^{2},\ldots ,\sum _{i=1}^{n}y_{i}^{n}]\right)/ \langle P_2,\ldots ,P_{n+2} \rangle \ ,
\end{equation}
where 
\begin{equation}\label{eq:rel}
P_k = (-y)^{k} + (-x + y)^{k} + \sum _{i=1}^{n}(y_i + \frac{1}{n}x)^{k},  \;\; \textrm{for} \  \ k = 2,\ldots ,n+2 \ .
\end{equation}
\end{prop}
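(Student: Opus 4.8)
The plan is to recognize \eqref{cohom} as the Borel presentation of the cohomology of the equal-rank homogeneous space $F_n=G/H$, with $G=SU(n+2)$ and $H=S(U(n)\x U(1)\x U(1))$. Since $H$ is connected and contains a maximal torus $T$ of $G$, Borel's theorem (see \cite{BH}) furnishes a natural isomorphism
$$
H^{*}(F_n;\R)\cong H^{*}(BT;\R)^{W_H}\big/\big\langle H^{>0}(BT;\R)^{W_G}\big\rangle ,
$$
where $H^{*}(BT;\R)$ is the polynomial algebra on the degree-two characters of $T$, the groups $W_G$ and $W_H$ are the respective Weyl groups, and the ideal is the one generated inside $H^{*}(BT;\R)^{W_H}$ by the positive-degree $W_G$-invariants. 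Concretely, I would put coordinates $t_1,\dots,t_{n+2}$ on the Cartan subalgebra $\TT$, subject to the single relation $\sum_j t_j=0$ imposed by $\gg=\mathfrak{su}(n+2)$, so that $W_G=S_{n+2}$ permutes all the $t_j$ while $W_H=S_n$ permutes only $t_1,\dots,t_n$ and fixes $t_{n+1},t_{n+2}$.

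With these coordinates the two invariant algebras are immediate. The $W_G$-invariants are generated modulo $\sum_j t_j=0$ by the power sums $p_k=\sum_{j=1}^{n+2}t_j^{\,k}$ for $k=2,\dots,n+2$ (the degree-one power sum being $\sum_j t_j=0$); these are the fundamental invariants of $A_{n+1}$, so the defining ideal is generated by $p_2,\dots,p_{n+2}$. The $W_H$-invariants are generated by $t_{n+1}$, $t_{n+2}$ and the symmetric functions of $t_1,\dots,t_n$.

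The remaining step is a linear change of generators adapted to the embedding $T\subset H\subset G$ coming from the $3$-symmetric structure. I would set
$$
t_i=y_i+\tfrac1n x\quad(1\le i\le n),\qquad t_{n+1}=-y,\qquad t_{n+2}=-x+y .
$$
A direct computation gives $\sum_j t_j=\sum_i y_i$, so this is a linear isomorphism carrying the relation $\sum_j t_j=0$ to $\sum_i y_i=0$; moreover it is $S_n$-equivariant, permuting the $t_i$ exactly as it permutes the $y_i$ while fixing $x$ and $y$. Hence $H^{*}(BT;\R)^{W_H}$ is identified with the $S_n$-invariants of $\R[x,y,y_1,\dots,y_n]$ subject to $\sum_i y_i=0$, which are generated by $x$, $y$ and the power sums $\sum_i y_i^{\,k}$; imposing $\sum_i y_i=0$ removes the first power sum and leaves precisely $x,\ y,\ \sum_i y_i^{2},\dots,\sum_i y_i^{\,n}$, the generators appearing in \eqref{cohom}. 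Substituting the change of variables into the relations yields $p_k=(-y)^k+(-x+y)^k+\sum_{i=1}^n(y_i+\tfrac1n x)^k=P_k$, so the ideal of relations is exactly $\langle P_2,\dots,P_{n+2}\rangle$, completing the identification.

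The one point requiring care is pinning down that this linear substitution genuinely encodes the inclusion $S(U(n)\x U(1)\x U(1))\hra SU(n+2)$ fixed in \S\ref{ABH}: the shift $\tfrac1n x$ reflects the traceless normalization forced on the $U(n)$-block by the determinant condition, and it is precisely this shift that makes the $W_G$-invariants specialize to the stated polynomials $P_k$. Everything else is formal---the passage between power sums and a free generating set is Newton's identities, and over $\R$ Borel's theorem applies with no torsion to worry about. As a consistency check, the degree-two part of \eqref{cohom} is spanned by $x$ and $y$, recovering $b_2(F_n)=2$.
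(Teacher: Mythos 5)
Your proposal is correct and takes essentially the same route as the paper: the paper writes down the same coordinate relations $x_i = y_i + \frac{1}{n}x$, $x_{n+1} = -x+y$, $x_{n+2} = -y$ and then invokes Cartan's theorem on the cohomology of compact homogeneous spaces, which for this equal-rank pair is exactly the Borel presentation you spelled out. Your write-up simply makes explicit the invariant theory the paper leaves implicit (Weyl groups $S_{n+2}$ and $S_n$, power sums as fundamental invariants, $S_n$-equivariance of the substitution), and the harmless swap of which $U(1)$-coordinate is $-y$ versus $-x+y$ does not affect the relations $P_k$.
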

\begin{proof}
Let $x_1,\ldots,x_{n+2}$ be canonical coordinates on the maximal Abelian subalgebra of $A_{n+1}$ with 
$x_1+\ldots+x_{n+2}=0$. Analogously, let $x,y$ be linear coordinates on $\TT ^2$ and $y_1,\ldots y_{n}$ 
canonical coordinates on $A_{n-1}$ with $y_1+\ldots+y_{n}=0$.
It is not hard to see that the relations between $x_1,\ldots ,x_{n+2}$ and $x,y,y_1,\ldots ,y_{n}$ are as follows
(cf.~\cite{T}):
\begin{equation}\label{restr}
\begin{split}
x_{i} &= y_{i} + \frac{1}{n}x\quad \textrm{for} \quad 1\leq i\leq n \ ,\\
x_{n+1} &= -x + y, \quad x_{n+2} = -y \ .
\end{split}
\end{equation}
Cartan's theorem on the cohomology of compact homogeneous spaces together with the relations~\eqref{restr} 
implies that for $G=SU(n+2)$ and $H=S(U(1)\times U(1)\times U(n))$ the cohomology ring of the quotient is given 
by~\eqref{cohom} and~\eqref{eq:rel}.
\end{proof} 
The relations $P_k$ for $k = 2, \ldots , n$ eliminate the cohomology generators
$\sum_{i=1}^{n}y_i^2,\ldots,\sum_{i=1}^{n}y_i^{n}$. It follows that $H^{*}(F_n)$ is generated by the two generators 
$x$ and $y$ of degree $2$, with relations in degrees $n+1$ and $n+2$.

\begin{lem}\label{l:compl}
The complementary roots for $A_{n+1}$ with respect to $\TT^{2}\oplus A_{n-1}$ are, up to sign, the following:
\begin{equation}\label{eq:compl}
\begin{split}
x_{n+1}-x_{n+2} &= 2y-x \ , \ \textrm{and} \\ 
x_i-x_{n+1} &=y_{i} +\frac{n+1}{n}x - y \ \  \ \textrm{for} \quad 1\leq i\leq n \ , \ \\
x_i-x_{n+2} &=y_i + \frac{1}{n}x + y \ \  \ \textrm{for} \quad 1\leq i\leq n \ .
\end{split}
\end{equation}
\end{lem}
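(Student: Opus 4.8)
The plan is to compute the complementary roots directly from the root system of $A_{n+1}$ together with the explicit coordinate substitution~\eqref{restr}. Recall that the roots of $A_{n+1}=\mathfrak{su}(n+2)$ are exactly the differences $x_i-x_j$ for $1\le i\ne j\le n+2$, where the $x_i$ are the canonical coordinates on the maximal Abelian subalgebra satisfying $x_1+\cdots+x_{n+2}=0$. The subalgebra $\TT^2\oplus A_{n-1}$ is generated by $\TT^2$ together with the root spaces $\gg_{\pm(x_i-x_j)}$ for $1\le i,j\le n$, since the $A_{n-1}$ block acts on the first $n$ coordinates. Thus the roots of $\hh$ are precisely the $x_i-x_j$ with both indices in $\{1,\dots,n\}$, and the complementary roots are all remaining roots $x_i-x_j$, namely those in which at least one index equals $n+1$ or $n+2$.

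First I would list these complementary roots explicitly. Up to an overall sign (which is the ambiguity allowed in the statement, since we list roots only up to the $\pm$ corresponding to a root space $\gg_{\pm\beta}$), they fall into three families: the single root $x_{n+1}-x_{n+2}$; the roots $x_i-x_{n+1}$ for $1\le i\le n$; and the roots $x_i-x_{n+2}$ for $1\le i\le n$. Every root with an index in $\{n+1,n+2\}$ arises, up to sign, in exactly one of these families, and none lies in $\hh$, so this is a complete and non-redundant list of the complementary roots. (One should note that $x_{n+1}$ and $x_{n+2}$ never both appear with indices inside the $A_{n-1}$ block, so there is no overlap with the roots of $\hh$.)

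Next I would substitute the expressions from~\eqref{restr} into each of the three families to rewrite them in the $x,y,y_i$ coordinates. For the first family this gives
\begin{equation*}
x_{n+1}-x_{n+2}=(-x+y)-(-y)=2y-x \ .
\end{equation*}
For the second family, using $x_i=y_i+\tfrac{1}{n}x$ and $x_{n+1}=-x+y$,
\begin{equation*}
x_i-x_{n+1}=\left(y_i+\tfrac{1}{n}x\right)-(-x+y)=y_i+\tfrac{n+1}{n}x-y \ ,
\end{equation*}
and for the third family, using $x_{n+2}=-y$,
\begin{equation*}
x_i-x_{n+2}=\left(y_i+\tfrac{1}{n}x\right)-(-y)=y_i+\tfrac{1}{n}x+y \ .
\end{equation*}
These are exactly the expressions in~\eqref{eq:compl}, completing the proof.

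There is no serious obstacle here; the argument is essentially a bookkeeping exercise. The only point demanding care is the identification of which roots belong to $\hh$ versus the complementary space: one must verify that the $A_{n-1}$ summand of $\hh$ accounts precisely for the root differences among the first $n$ coordinates, so that the complementary roots are exactly those involving at least one of the last two indices. This follows from the specific embedding of the subalgebra fixed by the $3$-symmetric structure, as recorded in~\eqref{restr}, so once that embedding is granted the remaining steps are a direct substitution.
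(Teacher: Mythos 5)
Your proposal is correct and follows essentially the same route as the paper: both identify the roots of the $A_{n-1}$ summand of $\hh$ with the differences among the first $n$ coordinates (equivalently $\pm(y_i-y_j)$ under~\eqref{restr}), conclude that the complementary roots are those involving an index $n+1$ or $n+2$, and then substitute~\eqref{restr} to obtain~\eqref{eq:compl}. The only cosmetic difference is ordering: the paper rewrites all roots of $A_{n+1}$ in the $x,y,y_i$ coordinates first and then discards the $\pm(y_i-y_j)$, whereas you single out the complementary families by index structure before substituting.
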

\begin{proof}
The roots for the algebra $A_{n+1}$ are $\pm (x_i - x_j)$, $1\leq i< j\leq n+2$, and for the subalgebra $A_{n-1}$ the 
roots are $\pm (y_i -y_j)$, $1\leq i< j\leq n$ . Using the relations~\eqref{restr}, we can express the roots for $A_{n+1}$
in the form
$$
\pm (y_i - y_j), \; \pm (y_i + \frac{1}{n}x + y), \; \pm (x - 2y), \; \pm (y_{i} + \frac{n+1}{n}x - y) \ .
$$
From this it is clear that the complementary roots are given by~\eqref{eq:compl}.
\end{proof}
We now deduce the following classification of invariant almost complex structures.
\begin{prop}\label{roots}
The homogeneous space $F_n$ admits at most three invariant almost complex structures $I$, $J$ and $\hat J$, 
up to equivalence and conjugation. Their roots are:
\begin{equation}\label{eq:roots}
\begin{split}
I \ &\colon \ \quad  y_i+\frac{n+1}{n}x -y,\;  y_{i}+\frac{1}{n}x+y,\;  -x+2y,\;  1\leq i\leq n \ , \\
J \ &\colon \ \quad  y_i+\frac{n+1}{n}x-y,\; -y_i-\frac{1}{n}x-y,\; x-2y,\;  1\leq i\leq n \ , \\
{\hat J} \ &\colon \ \quad  y_i+\frac{n+1}{n}x-y,\; -y_i-\frac{1}{n}x-y,\; -x+2y,\; 1\leq i\leq n  \ .
\end{split}
\end{equation}
The structures $I$ and $J$ are integrable, and $\hat J$ is not.
\end{prop}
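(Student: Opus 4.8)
The plan is to parametrize the invariant almost complex structures by sign choices on the three irreducible isotropy summands, to use an automorphism coming from the internal symmetry of $H$ to cut the a priori count of four down to three, and then to read off integrability from the Borel--Hirzebruch criterion in Lemma~\ref{l:integrable}. First I would group the complementary roots of Lemma~\ref{l:compl} according to the three irreducible summands of the isotropy representation. In the coordinates $x_i$ of~\eqref{restr} these are $S_0$, spanned by $\pm(x_{n+1}-x_{n+2})$; $S_1$, spanned by $\pm(x_i-x_{n+1})$ for $1\le i\le n$; and $S_2$, spanned by $\pm(x_i-x_{n+2})$ for $1\le i\le n$. By Schur's lemma an invariant almost complex structure is precisely a choice of one sign $\epsilon_0,\epsilon_1,\epsilon_2$ for each summand, giving $2^3=8$ structures, or $4$ after identifying conjugate ones. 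Normalizing $\epsilon_1=+$ by conjugating if necessary picks one representative per class, namely the triples $(+,+,+)$, $(-,+,-)$, $(+,+,-)$ and $(-,+,+)$; translating back via~\eqref{restr} (so that $\epsilon_0(x_{n+1}-x_{n+2})=\epsilon_0(-x+2y)$ and $\epsilon_2(x_i-x_{n+2})=\epsilon_2(y_i+\tfrac1n x+y)$) identifies the first three as the structures $I$, $J$, $\hat J$ with the roots listed in~\eqref{eq:roots}, together with a fourth structure $K=(-,+,+)$.

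The key step is to show that $K$ is redundant. Interchanging the two $U(1)$ factors of $H$ is an automorphism of $G$ fixing $H$; on the Cartan algebra it is the transposition $x_{n+1}\leftrightarrow x_{n+2}$, which fixes the root system $\{\pm(x_i-x_j):1\le i<j\le n\}$ of $H$. Since it sends $x_i-x_{n+1}\mapsto x_i-x_{n+2}$, $x_i-x_{n+2}\mapsto x_i-x_{n+1}$ and $x_{n+1}-x_{n+2}\mapsto -(x_{n+1}-x_{n+2})$, it acts on the sign triples by $(\epsilon_0,\epsilon_1,\epsilon_2)\mapsto(-\epsilon_0,\epsilon_2,\epsilon_1)$, interchanging $S_1$ and $S_2$ and reversing $S_0$. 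Applied to $K=(-,+,+)$ this yields exactly $I=(+,+,+)$, so by Remark~\ref{equivalence} the structures $K$ and $I$ are equivalent under an automorphism of $G$ fixing $H$. This leaves at most the three structures $I$, $J$, $\hat J$. As a consistency check I would note that the same automorphism sends $J$ and $\hat J$ to their conjugates, which is why neither acquires a further partner.

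It remains to settle integrability via Lemma~\ref{l:integrable}. For $I$ the chosen complementary roots are $x_i-x_{n+1}$, $x_i-x_{n+2}$ and $x_{n+1}-x_{n+2}$, all positive for the ordering $x_1>\cdots>x_{n+1}>x_{n+2}$; the only sum of two of them that is again a root is $(x_i-x_{n+1})+(x_{n+1}-x_{n+2})=x_i-x_{n+2}$, which already lies in the system, so the system is positive and closed and $I$ is integrable. For $J$ the chosen roots are $x_i-x_{n+1}$, $x_{n+2}-x_i$ and $x_{n+2}-x_{n+1}$, positive for any ordering with $x_{n+2}>x_i>x_{n+1}$, and the only relevant sum $(x_{n+2}-x_i)+(x_i-x_{n+1})=x_{n+2}-x_{n+1}$ is again in the system, so $J$ is integrable. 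For $\hat J$, however, the chosen roots include $x_i-x_{n+1}$, $x_{n+2}-x_i$ and $x_{n+1}-x_{n+2}$, whose sum is $0$; no linear functional can be strictly positive on all three, so no ordering makes the complementary system positive and $\hat J$ is not integrable.

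The main obstacle is isolating the correct automorphism and keeping careful track of its induced action on the sign triples, since this is what reduces four potential structures to three; once that bookkeeping is done, the integrability of $I$ and $J$ follows immediately by exhibiting the orderings above, and the non-integrability of $\hat J$ reduces to the clean observation that its three defining complementary roots sum to zero.
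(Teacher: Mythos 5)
Your proposal is correct and follows essentially the same route as the paper: the same decomposition into the three isotropy summands, the same sign parametrization cut down from four classes to three by the automorphism swapping $x_{n+1}$ and $x_{n+2}$ (invoking Remark~\ref{equivalence}), and the same appeal to Lemma~\ref{l:integrable} for integrability. Your only additions are welcome extra detail—explicit closedness checks for $I$ and $J$, and the clean observation that the three complementary roots of $\hat J$ sum to zero, which the paper asserts without argument.
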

We will see that for $n=1$ the integrable structure $I$ is equivalent to the complex conjugate of $J$ under an automorphism, 
whereas for $n\geq 2$ this is no longer true; in fact the different integrable structures are then distinguished by their Chern 
classes.
The non-integrable invariant almost complex structure is the natural nearly K\"ahler structure arising from the $3$-symmetric
structure, as discussed in Subsection~\ref{ss:nK} above.
\begin{proof}
From the description of the complementary roots in Lemma~\ref{l:compl} it follows that the isotropy representation 
decomposes into the direct sum of the following three irreducible summands:
\begin{equation}\label{eq:isotropy}
R_0=\gg _{x_{n+1}-x_{n+2}}, \quad R_1=\gg _{x_1-x_{n+1}}\oplus \ldots \oplus \gg _{x_{n} -x_{n+1}}, 
\quad R_2=\gg _{x_1-x_{n+2}}\oplus \ldots \oplus \gg _{x_{n}-x_{n+2}} \ .
\end{equation}    
It follows that, up to conjugation, there are $4$ invariant almost complex structures. Their roots are given by choosing 
signs for the irreducible summands of the isotropy representation. Up to conjugation, we may take the following signs:
\begin{enumerate}
\item[(a)] $R_0^{+}, R_1^{+}, R_2^{+}$,
\item[(b)] $R_0^{-}, R_1^{+}, R_2^{+}$,
\item[(c)] $R_0^{-}, R_1^{+}, R_2^{-}$,
\item[(d)] $R_0^{+}, R_1^{+}, R_2^{-}$.
\end{enumerate}
The invariant almost complex structures given by the first three choices are integrable by Lemma~\ref{l:integrable}, 
as their roots correspond to the orderings $x_1<\ldots<x_{n+1}<x_{n+2}$, $x_1<\ldots <x_{n+2}<x_{n+1}$ and 
$x_{n+2}<x_1<\ldots <x_{n+1}$, respectively. Moreover, the automorphism of the maximal Abelian subalgebra for $A_{n+1}$ 
given by interchanging $x_{n+1}$ and $x_{n+2}$ maps the complementary roots of the first structure to the complementary 
roots of the second structure and leaves the root system of $A_{n-1}$ invariant. Therefore, it follows from Remark~\ref{equivalence} 
that these two structures are equivalent under an automorphism of the homogeneous space.

The fourth structure is not integrable, as there is no ordering on the coordinates $x_1,\ldots ,x_{n+2}$ for which the roots defining 
this structure are positive. 

Thus there are two integrable and one non-integrable invariant almost complex structure on $F_n$. The roots in~\eqref{eq:roots}
arise from (a) for $I$, (c) for $J$ and (d) for $\hat J$ by combining~\eqref{eq:isotropy} (with the appropriate signs) with~\eqref{eq:compl}.
\end{proof}

\section{Chern numbers from Lie theory}\label{s:ChernLie}

We now calculate certain Chern classes and Chern numbers of the invariant almost complex structures on $F_n$ using
Lie theory.

\begin{ex}\label{ex:c1}
The first Chern classes for the  structures $I$, $J$ and $\hat J$ are obtained immediately from Lemma~\ref{l:Chern} using 
the description of the corresponding roots in Proposition~\ref{roots}. The result is:
\[
c_1(I)=(n+1)x+2y,\quad c_1(J)=(n+1)(x-2y),\quad c_1(\hat J)=(n-1)(x-2y) \ .
\]
\end{ex}

\begin{ex}\label{ex:c2}
The cohomology relation $P_2$ from~\eqref{eq:rel} gives that
\[
\sum_{i=1}^{n}y_i^2 = 2xy-2y^2-\frac{n+1}{n}x^2 \ .
\]
Using this, we compute the second Chern classes for the invariant almost structures using Lemma~\ref{l:Chern} and the 
description of their roots given by Proposition~\ref{roots}. The result is:
\begin{alignat*}{1}
c_2(I) &=\frac{n(n+1)}{2}x^2+(3n+2)xy+(2-n)y^2 \ , \\
c_2(J) &=\frac{n(n+1)}{2}x^2+(-2n^2-3n-2)xy+(2n^2+3n+2)y^2 \ , \\
c_2({\hat J}) &=\frac{n(n-3)}{2}x^2+(-2n^2+5n-2)xy+(2n^2-5n+2)y^2 \ .
\end{alignat*}
\end{ex}  

\begin{rem}
Proposition~\ref{roots} implies that for even $n$ the structures $I$ and $\hat J$ define the same orientation on $F_n$, while $J$ 
defines the opposite orientation. For odd $n$, the orientations given by $I$ and $J$ are the same, while the one given by $\hat J$ is 
different. This fact will show up in the sign of their top Chern classes in the calculations we provide below for $n=2$ and $n=3$. 
\end{rem} 

To calculate Chern numbers explicitly we now consider the cases where $n$ is small.

\subsection{The complete flag manifold $F_1$}\label{ss:complete}

The case $n=1$ is special because the three irreducible summands of the isotropy representation are 
all of the same real dimension equal to two. The map interchanging $x_1$ and $x_2$ but fixing $x_3$
defines an automorphism of $A_2$ fixing the Abelian subalgebra $\TT^2$ which interchanges the invariant 
complex structure $I$ and the complex conjugate of $J$ on $F_1$. It follows in particular that 
they have the same Chern numbers.

In this case the two cohomology generators $x$ and $y$ satisfy the relations $y^2-xy+x^2=0$ and 
$-y^3+(y-x)^3+x^3=0$ obtained from~\eqref{eq:rel} by setting $k=1$ and $k=2$ respectively. The second 
relation simplifies to $xy^2=x^2y$, which together with the first relation implies $x^3=y^3=0$.
Using the relations we find, in addition to $c_1(I)=2(x+y)$ from Example~\ref{ex:c1}, that $c_2(I)=6xy$
and  $c_3(I)=6x^2y$. Clearly the topological Euler characteristic is $6$, so $x^2y=xy^2$ is the positive 
cohomology generator in top degree with respect to the orientation defined by the complex structure $I$.
Multiplying out and using the relations again we find the well known values for the Chern numbers:
$c_1c_2(I)=24$ and $c_1^3(I)=48$.

For the non-integrable invariant almost complex structure $\hat J$ we already know $c_1(\hat J)=0$ 
by setting $n=1$ in the formula in Example~\ref{ex:c1}. Thus $c_1c_2(\hat J)=c_1^3(\hat J)=0$. 

By the discussion in Subsection~\ref{ss:nK}, the non-integrable invariant almost complex structure
$\hat J$ is nearly K\"ahler because $A_{2}$ and $\TT^2$ form a $3$-symmetric pair. It is a result of 
Gray~\cite{G2} that every non-K\"ahler nearly K\"ahler manifold of real dimension $6$ has vanishing 
first Chern class.

\subsection{The case $n=2$} 

This is the example mentioned first by Borel and Hirzebruch in~\cite{BH}, 13.9 and 24.11, and then in~\cite{H05}.
There only the values of $c_1^5$ are given for two different invariant complex structures. These are the $I$ 
and $J$ discussed above, and we now give complete calculations for their Chern numbers.

The formula~\eqref{eq:rel} for the cohomology relations gives, for $k=2$, that $y_1=z$ satisfies 
$z^{2}=-\frac{1}{2}(\frac{3}{2}x^{2}-2xy+2y^{2})$.
Therefore, using again~\eqref{cohom} and~\eqref{eq:rel}, we see that the generators $x$ and $y$ of the 
cohomology algebra of $F_2$ satisfy  the relations
$$
x^{3}=2(x^{2}y-xy^{2}), \quad y^{4}=0 \ .
$$
This gives the following relations in top degree cohomology:
$$
y^5=xy^4=0,\quad x^4y=x^3y^2=2x^2y^3 \ .
$$

Using Lemma~\ref{l:Chern} and the relations in
cohomology we find that the Chern classes of $I$ are as follows:
$$
c_1(I)=3x+2y,\quad c_2(I)=3x^2+8xy,\quad c_3(I)=-x^3+14x^2y \ ,
$$
$$
c_4(I)=-14x^3y+14x^2y^2-8xy^3,\quad c_5(I)=12x^2y^3 \ .
$$
Since the Euler characteristic of this space is $12$, we obtain the Chern numbers
given in the first column of the table in the introduction.

For $J$ we find in the same way
$$
c_1(J)=3(x-2y),\quad c_2(J)=3x^2-16xy+16y^2,\quad c_3(J)=x^3-14x^2y+36xy^2-24y^3 \ ,
$$
$$
c_4(J)=-2x^3y+22x^2y^2-40xy^3,\quad c_5(J)=-12x^2y^3 \ .
$$
Multiplying out and using the relations in cohomology this leads to the Chern numbers given in the 
second column of the table in Figure~\ref{table2}.

There are several ways to check that we have not made numerical mistakes in the calculations. First of all, as 
explained in Sections~\ref{s:standard} and~\ref{ss:Cherntwistor} below, all these numbers can be calculated in 
a completely different 
way without using Lie theory, and that calculation leads to the same results. Second of all, the Chern numbers 
must satisfy certain relations imposed by the Hirzebruch--Riemann--Roch theorem. For the arithmetic genus 
of our five-fold $F_2$, HRR gives
    $$
    \sum_{q=0}^{5}(-1)^qh^{0,q}=
    \frac{1}{1440}(-c_{1}c_{4}+c_{1}^{2}c_{3}+3c_{1}c_{2}^{2}-c_{1}^{3}c_{2}) \ .
    $$
The left hand side is $=1$ because all the cohomology is of type $(p,p)$ by Lemma~\ref{l:Hodge}. Substituting 
the values of the Chern numbers computed above into the right hand side provides a non-trivial consistency check.

In the same way as we did the calculation for the integrable structures, we can also compute the Chern classes 
for the non-integrable invariant complex structure $\hat J$:
$$
c_1(\hat J) = x-2y, \quad c_2(\hat J) = -x^2, \quad c_3(\hat J) = -x^3 +6x^2y - 12 xy^2 +8y^3 \ , 
$$
$$
c_4(\hat J) = 6x^3y-18x^2y^2 + 24xy^3, \quad c_5(\hat J) = 12x^2y^3 \ .   
$$
This gives the Chern numbers in the third column of the table in Figure~\ref{table2}.

\subsection{The case $n=3$}\label{ss:3}

Now we consider $F_3$, of real dimension $14$. From~\eqref{cohom} it follows that its real 
cohomology algebra has two generators $x$ and $y$ of degree $2$ and the relations in degree $4$ and $6$ are 
$$
\sum _{i=1}^{3}y_i^2 = 2xy -\frac{4}{3}x^2-2y^2 \ ,\ \sum
_{i=1}^{3}y_i^3 = \frac{20}{9}x^3-5x^2y+5xy^2 \ .
$$
Taking into account these expressions, the relations in degree $8$ and $10$ produce the following 
relations between $x$ and $y$:
$$
x^4+y^4-3x^3y+4x^2y^2-2xy^3, \;\; y^5=0 \ .
$$
Therefore in degree $12$ we get that $x$ and $y$ satisfy the following relations:
$$
y^6=xy^5=0, \quad x^4x^2=-4x^2y^4+3x^3y^3 \ ,
$$
$$
x^5y=-10x^2y^4+5x^3y^3,\quad x^6=-15x^2y^4+5x^3y^3 \ .
$$
This implies that in top degree cohomology we have 
$$
y^7=xy^6=x^2y^5=0,\quad x^4y^3=3x^3y^4, \quad x^5y^2=5x^3y^4 \ ,
$$
$$
 x^6y=5x^3y^4,\quad  x^7=0 \ .
$$
 
From Lemma~\ref{l:Chern} and the cohomology relations we find for $I$:
$$
c_1(I)=4x+2y, \quad c_2(I) =6x^2+11xy-y^2, \quad c_3(I) = 4x^3+21x^2y+3xy^2-2y^3 \ ,
$$
$$
c_4(I) = -5x^4+35x^3y-5x^2y^2, \quad c_5(I) = 5x^4y+25x^3y^2-10x^2y^3 \ ,
$$
$$
c_6(I) = 15x^4y^2-5x^3y^3, \quad c_7(I) = 20x^3y^4 \ .
$$
Now by a direct calculation one can obtain all the Chern numbers for $I$ in this case.
These are the numbers contained in the first column of the table in Figure~\ref{table3}.

We can also calculate the Chern classes and Chern numbers for $J$ and for $\hat J$ in the same 
way. We obtain that the Chern classes for $J$ are:
$$
c_1(J) =4x-2y, \quad c_2(J) =6x^2-29xy+29y^2, \quad c_3(J) = 4x^3-39x^2y+93xy^2-62y^3 \ ,
$$
$$
c_4(J) = -85x^4+235x^3y-235x^2y^2, \quad c_5(J) = 1095x^4y-1245x^3y^2+230x^2y^3 \ ,  
$$
$$
c_6(J) =-30x^4y^2+50x^3y^3, \quad c_7(J) =20x^3y^4 \ .
$$
For $\hat J$ the Chern classes are given by:
$$
c_1(\hat J) =2x-4y, \quad c_2(\hat J) =-5xy+5y^2, \quad c_3(\hat J) = -2x^3+7x^2y-9xy^2+6y^3 \ , 
$$
$$
c_4(\hat J) = 25x^4-65x^3y+65x^2y^2, \quad c_5(\hat J) = -45x^4y+115x^3y^2-110x^2y^3 \ , 
$$
$$
c_6(\hat J) =15x^4y^2-25x^3y^3, \quad c_7(\hat J) = -20x^3y^4 \ . 
$$
The Chern classes for $J$ and $\hat J$ lead to the second and third columns in the table in Figure~\ref{table3}.

\section{The complex geometry of $F_n$}\label{s:cx}

We now give geometric descriptions of the almost Hermitian structures of $F_n$ without using Lie theory or Gray's results 
on the structure of $3$-symmetric spaces.

We think of $F_n$ as being a partial flag manifold, as follows:
$$
F_n = \{ (L,P) \ \vert \ P \ \textrm{a} \ 2-\textrm{plane in} \  \C^{n+2}, \ L \ \textrm{a line in} \ P\} \ .
$$
This has a natural complex projective-algebraic structure with ample anti-canonical bundle and a
K\"ahler--Einstein metric of positive scalar curvature. With respect to this complex structure, there are two
forgetful holomorphic maps, mapping a pair $(L,P)$ to either $L$ or $P$. On the one hand, the map to $L$ gives a
fibration
$$
p\colon F_n \longrightarrow \C P^{n+1} \ ,
$$
exhibiting $F_n$ as the projectivized tangent bundle of $\C P^{n+1}$. On the other hand, the map $(L,P)\mapsto P$
defines a holomorphic fibration
$$
\pi\colon F_n\longrightarrow G_n \ ,
$$
where $G_n = Gr_{2,n}$ is the Grassmannian of complex $2$-planes in $\C^{n+2}$. The fiber of $\pi$ is $\C P^1$.

The Grassmannian
$$
G_n =  SU(n+2)/S(U(n)\times U(2))
$$
has a homogeneous complex structure, which is unique up to conjugation. With respect to this structure $G_n$ is a
Hermitian symmetric space and carries a K\"ahler--Einstein metric. This metric is quaternionic K\"ahler in the sense of 
Salamon~\cite{S}, meaning that its reduced holonomy group is contained
in $Sp(n)\cdot Sp(1)$, the quotient of $Sp(n)\times Sp(1)$ by the subgroup $\{\pm (1,1)\}\cong\Z_2$. Moreover,
the scalar curvature of this metric is positive.

To any quaternionic K\"ahler $4n$-manifold $M$ with positive scalar curvature, Salamon~\cite{S} associates a twistor space 
$Z$, which is the total space of a certain $S^2$-bundle over $M$, together with a complex structure $J$ and compatible 
K\"ahler--Einstein metric $g$ of positive scalar curvature on the total space, with the following properties, see~\cite{S,LeS}:
\begin{itemize}
\item the projection $\pi\colon Z\longrightarrow M$ is a Riemannian submersion with totally geodesic fibers of constant
Gaussian curvature,
\item the fibers of $\pi$ are holomorphic curves in $Z$ (although $M$ is not usually complex, so that $\pi$ is not holomorphic
in any sense), and
\item the orthogonal complement $D$ of the tangent bundle along the fibers $T\pi$ is a holomorphic contact 
distribution.
\end{itemize}
Just like in the real case, a contact distribution is a maximally non-integrable hyperplane distribution. Given $D\subset TZ$,
let $L=TZ/D$ be the quotient line bundle and $\alpha\colon TZ\longrightarrow L$ the  projection with kernel $D$. The
maximal non-integrability of $D$ means that if we think of $\alpha$ as a one-form with values in $L$, then the 
$(2n+1)$-form $\alpha\wedge (d\alpha)^n$ with values in $L^{n+1}$ is no-where zero. Thus $\alpha\wedge (d\alpha)^n$
is an isomorphism between the anti-canonical bundle $K^{-1}$ and $L^{n+1}$. In particular $c_1(Z)=c_1(K^{-1})=(n+1)c_1(L)$.
This relation, for arbitrary holomorphic contact manifolds, was already observed by Kobayashi~\cite{Kobayashi}.

As the fibers of $\pi$ are holomorphic curves in $Z$, the tangent bundle along the fibers $T\pi$ is a complex
line bundle over $Z$. The projection $\alpha$ gives an isomorphism between $T\pi$ and $L$. Thus, disregarding the 
holomorphic structure, we have an isomorphism $TZ\cong L\oplus D$ of complex vector bundles over $Z$.

Following Eells and Salamon~\cite{ES} one can define another almost Hermitian structure $(\hat J,\hat g)$ on $Z$ as follows. 
With respect to $\hat g$ the subbundles $T\pi$ and $D$ of $TZ$ are orthogonal, and $\hat g$ agrees with $g$ on $D$. For 
$v,w\in T\pi$ we define $\hat g(v,w)=\frac{1}{2}g(v,w)$. The subbundles $T\pi$ and $D$ of $TZ$ are invariant under $\hat J$, 
and $\hat J$ agrees with $J$ on $D$ and agrees with $-J$ on $T\pi$. According to~\cite{AGI,Nagy1},
the pair $(\hat J,\hat g)$ satisfies $(\nabla_v \hat J)v = 0$ for all vector fields $v$, where $\nabla$ denotes the Levi-Civit\`a 
connection of $\hat g$.
This precisely means that $(\hat J,\hat g)$ is a nearly K\"ahler structure in the sense of Gray~\cite{G2}. Note that by the 
definition of $\hat J$, the complex vector bundle $(TZ,\hat J)$ is isomorphic to $L^{-1}\oplus D$. This will allow us to 
determine the Chern classes of the nearly K\"ahler structure from those of the twistor space structure.

\section{The cohomology ring and some consequences}\label{s:coho}

We can easily describe the cohomology ring of $F_n$ explicitly using its description as the projectivization of the tangent
bundle of $\C P^{n+1}$.
\begin{prop}\label{p:coho}
The cohomology ring of $F_n$ is generated by two elements $x$ and $y$ of degree $2$, subject to the relations
\begin{equation}\label{eq:relFn}
x^{n+2}=0 \ , \qquad \ \frac{(x+y)^{n+2}-x^{n+2}}{y} = 0 \ .
\end{equation}
\end{prop}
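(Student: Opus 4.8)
The plan is to read off the ring structure directly from the identification of $F_n$ with the projectivized tangent bundle $p\colon F_n\to\C P^{n+1}$ established in the previous section, via the classical projective bundle formula (Leray--Hirsch). Write $x\in H^2(\C P^{n+1})$ for the hyperplane class, so that $H^{*}(\C P^{n+1})=\Z[x]/(x^{n+2})$, and denote the pullback $p^{*}x$ again by $x$. Let $\OO(-1)\subset p^{*}T\C P^{n+1}$ be the tautological line bundle of the projectivization and set $y=c_1(\OO(1))$, which restricts to a generator of the cohomology of each fibre $\C P^{n}$. Leray--Hirsch then asserts that $H^{*}(F_n)$ is a free module over $H^{*}(\C P^{n+1})$ with basis $1,y,\ldots,y^{n}$; in particular $p^{*}$ is injective, so $x$ satisfies $x^{n+2}=0$ and no relation in lower degree. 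This already produces the first relation and shows that $x$ and $y$ generate the ring.

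It then remains to identify the single relation expressing $y^{n+1}$ in terms of lower powers of $y$. By the projective bundle theorem this relation is
\[
\sum_{i=0}^{n+1} c_i(T\C P^{n+1})\, y^{\,n+1-i}=0 ,
\]
with the Chern classes pulled back to $F_n$. The Euler sequence
\[
0\lra\OO\lra\OO(1)^{\oplus(n+2)}\lra T\C P^{n+1}\lra 0
\]
gives $c(T\C P^{n+1})=(1+x)^{n+2}$, hence $c_i(T\C P^{n+1})=\binom{n+2}{i}x^{i}$. Substituting and re-indexing, the displayed relation becomes $\sum_{i=0}^{n+1}\binom{n+2}{i}x^{i}y^{\,n+1-i}=0$, and a one-line binomial identity (cancel the $k=0$ term of the expansion of $(x+y)^{n+2}$ and divide by $y$) rewrites this as $\big((x+y)^{n+2}-x^{n+2}\big)/y=0$. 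This is exactly the second relation in \eqref{eq:relFn}.

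Finally, a dimension count confirms that no further relations are needed. The second relation is monic of degree $n+1$ in $y$, so modulo the two relations of degrees $n+1$ and $n+2$ (as polynomials in the degree-two generators $x,y$) the quotient ring is free over $\Z[x]/(x^{n+2})$ on $1,\ldots,y^{n}$, and therefore has the Betti numbers supplied by Leray--Hirsch. The only point requiring care is bookkeeping of conventions: the choice of $\OO(1)$ rather than $\OO(-1)$, and hence the sign of $y$, is fixed precisely so that the projective bundle relation comes out with the positive binomial coefficients appearing in \eqref{eq:relFn}. I expect this sign convention to be the sole genuine subtlety, the remainder being the standard Chern class computation for $\C P^{n+1}$ together with the elementary identity above.
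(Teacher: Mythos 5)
Your proposal is correct and is essentially the paper's own proof: both identify $F_n$ with $\PP(T\C P^{n+1})$, take $x=p^*(H)$ and $y$ the tautological (fiberwise hyperplane) class, invoke Leray--Hirsch, and rewrite the projective-bundle relation $y^{n+1}+c_1y^n+\cdots+c_{n+1}=0$ using $c(T\C P^{n+1})=(1+x)^{n+2}$ as $\bigl((x+y)^{n+2}-x^{n+2}\bigr)/y=0$. The extra details you include --- the Euler sequence, the explicit freeness/dimension count, and the remark on the $\OO(1)$ sign convention --- are all consistent with, and merely elaborations of, the argument in the paper.
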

\begin{proof}
Consider the fibration $p\colon F_n\rightarrow \C P^{n+1}$ given by the projectivization of the tangent bundle of
$\C P^{n+1}$. Let $x=p^*(H)$ denote the pullback of the hyperplane class. Then $x^{n+2}=0$ for dimension reasons.
Let $y$ be the tautological class on the total space, restricting to the hyperplane class on every fiber. By the
Leray--Hirsch theorem the cohomology ring of $F_n$ is a module over the cohomology ring of $\C P^{n+1}$, generated
by the class $y$.

The definition of Chern classes shows
\begin{equation}\label{e:co}
y^{n+1}+c_1y^{n}+\ldots + c_{n+1} = 0 \ ,
\end{equation}
where the $c_i$ are the pullbacks to the total space of the Chern classes of the base. As the total Chern class of
$\C P^{n+1}$ is given by $(1+H)^{n+2}$, the relation~\eqref{e:co} can be rewritten as $((x+y)^{n+2}-x^{n+2})/y = 0$.
\end{proof}
The Proposition holds for integral coefficients, so the class $x^{n+1}y^n$ generates the top degree integral
cohomology of $F_n$. Monomials of the form $x^my^{2n+1-m}$ vanish if $m>n+1$. The remaining relations in
top degree are given explicitly by
\begin{equation}\label{eq:exp}
x^{n+1-k}y^{n+k} = (-1)^k {n+1+k \choose k} x^{n+1}y^n \qquad \textrm{for} \qquad k\leq n+1 \ .
\end{equation}
This can easily be derived from~\eqref{eq:relFn} by induction on $k$.

The Poincar\'e polynomial of $F_n$ is
\begin{equation}\label{eq:Poincare}
P_{F_n}(t) = (1+t^2+\ldots +t^{2n})(1+t^2+\ldots +t^{2n+2}) \ .
\end{equation}
Therefore the Betti numbers of $F_n$ are
\begin{equation}\label{eq:Betti}
b_{2p}(F_n)=b_{4n+2-2p}(F_n)=p+1 \qquad \textrm{for} \qquad 0\leq p\leq n \ ,
\end{equation}
and zero otherwise. Note that additively the cohomology of $F_n$ is the same as that of $\C P^n\times\C P^{n+1}$,
but the ring structure is different.

\subsection{Failure of geometric formality}

Recall that a closed manifold is called geometrically formal if it admits a Riemannian metric for which
wedge products of harmonic forms are harmonic; cf.~\cite{K}.
We now prove the following:
\begin{thm}
For all $n\geq 1$, any closed oriented manifold $M$ with the cohomology ring of $F_n=SU(n+2)/S(U(n)\times U(1)\times U(1))$ 
is not geometrically formal.
\end{thm}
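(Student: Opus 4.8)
The plan is to exploit the special ring structure of $H^*(F_n)$ recorded in Proposition~\ref{p:coho}, and to argue by contradiction. Suppose $M$ carries a metric for which the harmonic forms $\mathcal{H}^*$ form a subalgebra of the de Rham algebra. The key structural input is that $b_2(F_n)=2$, so there is a two-dimensional space of harmonic $2$-forms; I would pick a basis $\{\alpha,\beta\}$ of $\mathcal{H}^2$ representing the generators $x$ and $y$. By geometric formality all products $\alpha^a\beta^b$ are again harmonic, so the entire subring generated in degree $2$ consists of harmonic forms. The heart of the argument is to locate a degree where the \emph{pointwise} algebra of harmonic forms is forced to behave badly: one looks at the first degree where the cohomology ring of $F_n$ has a nontrivial relation, and shows that the relation among cohomology classes cannot be matched by a relation among the corresponding harmonic forms unless one of those forms vanishes identically, which would contradict its being a nonzero cohomology class.

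Concretely, I would follow the strategy of~\cite{KT}. The essential fact is a pointwise-length estimate for products of harmonic forms: in a geometrically formal metric, the product of a harmonic form with itself is controlled, and in particular if a harmonic form $\omega$ satisfies $\omega^k=0$ as a harmonic form (forced by the cohomological relation $x^{n+2}=0$ in degree $2n+4$), then $\omega$ must vanish on a dense open set, hence everywhere. The relation $x^{n+2}=0$ says the harmonic representative $\alpha$ of $x$ satisfies $\alpha^{n+2}=0$ in cohomology; by formality $\alpha^{n+2}$ is harmonic, and a harmonic form that is exact is zero, so $\alpha^{n+2}=0$ pointwise. Standard linear algebra of forms (as in~\cite{KT}) then forces $\alpha$ to have rank bounded away from maximal at every point, and iterating this nilpotency constraint one derives that the harmonic subalgebra generated by $\alpha,\beta$ cannot realize the full Poincar\'e polynomial~\eqref{eq:Poincare}. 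The contradiction is that formality would make the harmonic subalgebra surject onto $H^*(M;\R)$, but the pointwise nilpotency relations among $\alpha$ and $\beta$ cut down the dimension of the harmonic forms in top degree below the required $b_{4n+2}=1$, or more precisely force an incompatibility in some intermediate degree.

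I expect the main obstacle to be making the pointwise argument uniform across all $n$ in a single clean step, rather than degree by degree. The two relations in~\eqref{eq:relFn} have degrees $n+1$ and $n+2$ as polynomials in $x,y$, so the critical degree where the argument bites grows with $n$; one must check that the nilpotency forced by $x^{n+2}=0$ together with the second relation genuinely obstructs formality for every $n\geq 1$, and not merely for small $n$. The $n=1$ case reduces to the complete flag manifold $F_1$ already handled in~\cite{KT}, which gives confidence in the method and supplies the base case; the inductive or uniform step is to verify that the interaction of the two generators $x,y$ under the relation $\frac{(x+y)^{n+2}-x^{n+2}}{y}=0$ still produces a forbidden pointwise algebra. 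The cleanest formulation is likely to identify a specific product of harmonic representatives that is simultaneously forced to be zero (cohomologically exact, hence harmonic-zero) and forced to be nonzero (as it represents a nonzero cohomology class such as the top generator $x^{n+1}y^n$), and this contradiction is what I would drive the proof toward.
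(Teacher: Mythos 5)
Your overall framework --- argue by contradiction, pass to harmonic representatives of $x$ and $y$, use formality to promote the cohomological relations to pointwise relations among forms, and aim the contradiction at the volume form $x^{n+1}y^n$ --- is the same as the paper's, but the step that is supposed to produce the contradiction is missing, and the one concrete mechanism you assert is false. You claim that a harmonic form $\omega$ with $\omega^k=0$ pointwise ``must vanish on a dense open set, hence everywhere.'' This is wrong: on $\C P^n\times\C P^{n+1}$, which is a symmetric space and hence geometrically formal, the pullback of the Fubini--Study form from the second factor is a harmonic $2$-form $\alpha$ with $\alpha^{n+2}=0$ pointwise, yet nowhere zero. Indeed, on \emph{any} geometrically formal manifold a harmonic form representing a nonzero class is nowhere zero, since $\alpha\wedge *\alpha=|\alpha|^2\,\mathrm{vol}$ is a product of harmonic forms, hence harmonic, hence a constant multiple of the volume form (cf.~\cite{K}); nilpotency only bounds the pointwise rank of $\alpha$, exactly as on the product. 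Moreover, since $\C P^n\times\C P^{n+1}$ has the same Betti numbers as $F_n$ and nilpotent degree-two generators, no argument relying only on nilpotency plus dimension counts (your ``cannot realize the full Poincar\'e polynomial'') can succeed: under formality the harmonic subalgebra \emph{is} the entire cohomology ring, because the ring is generated in degree $2$. Your ``cleanest formulation'' is also impossible as stated: a wedge product of harmonic representatives represents one definite cohomology class, so it cannot be simultaneously exact and nonzero in cohomology; no contradiction can come from products alone.

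What you defer as ``the main obstacle'' is precisely the proof, and it requires contractions, not products. The paper changes basis to $x$ and $z=x+y$, so that \emph{both} generators are nilpotent, $x^{n+2}=z^{n+2}=0$ by~\eqref{eq:relFn}, while $x^{n+1}$ and $z^{n+1}$ are nowhere zero because $x^nz^{n+1}=-x^{n+1}y^n$ is, up to sign, the volume form; hence at every point the $2$-forms $x$ and $z$ have rank exactly $2n+2$ and kernels of rank $2n$. Formality turns the second relation of Proposition~\ref{p:coho} into the identity of forms $z^{n+1}+xz^n+\cdots+x^{n+1}=0$. Contracting with a local basis $v_1,\ldots,v_{2n}$ of $\ker x$ kills every term containing $x^k$ with $k\geq 2$ and leaves $i_{v_1}\cdots i_{v_{2n}}z^{n+1}+x\wedge i_{v_1}\cdots i_{v_{2n}}z^n=0$; contracting once more with $w\in\ker z$ gives $i_wx\wedge i_{v_1}\cdots i_{v_{2n}}z^n=0$. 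At any point, either the function $f=i_{v_1}\cdots i_{v_{2n}}z^n$ vanishes, in which case evaluating $x^{n+1}z^n$ on a basis adapted to $\ker x$ shows $x^{n+1}z^n=0$ there; or $f\neq 0$, in which case $i_wx=0$ for all $w\in\ker z$, so $\ker z=\ker x$ by equality of ranks, and contracting $x^{n+1}z^n$ with a nonzero vector of this common kernel again gives $x^{n+1}z^n=0$ there. Either way $x^{n+1}z^n=x^{n+1}y^n$ fails to be a volume form, the desired contradiction. Without this pointwise linear-algebra step --- which is exactly where the second relation and the change of basis do their work --- your proposal does not constitute a proof.
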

This is a consequence of the ring structure on cohomology, bringing out the difference between $F_n$ and
$\C P^n\times\C P^{n+1}$. The latter is a symmetric space, and therefore geometrically formal. The case $n=1
$ was proved in~\cite{KT}, where we also considered other homogeneous spaces $G/H$ where $H$ is a torus. 
The following proof shows that the arguments of~\cite{KT} apply much more generally.
\begin{proof}
Let $x$ and $y\in H^2(M;\Z )$ be as in Proposition~\ref{p:coho}, so that $x^{n+1}y^n$ is a generator for the top
cohomology of $M$. We can use $x$ and $z=x+y$ as a basis for the cohomology. Then $z^{n+2}=0$ by
Proposition~\ref{p:coho}, but
$$
x^n z^{n+1} = x^n (y^{n+1}+(n+1)xy^n)=-x^{n+1}y^n\neq 0
$$
by~\eqref{eq:exp}.

Suppose now that $M$ was geometrically formal. Then, identifying the harmonic forms for a formal metric with
their cohomology classes, the above relations hold for the harmonic forms. Thus $x^{n+2}=z^{n+2}=0$, but both
$x^{n+1}$ and $z^{n+1}$ are nowhere zero, because $x^{n+1}y^n = -x^n z^{n+1}$ is a volume form. Thus both $x$ and
$y$ are closed $2$-forms of rank $2n+2$, with kernels of rank $2n$.

Now rewriting~\eqref{e:co} in terms of $x$ and $z$ we obtain
$$
z^{n+1}+xz^n+x^2 z^{n-1}\ldots +x^{n+1} = 0
$$
at the level of forms. Contracting this equation with a local basis $v_1,\ldots , v_{2n}$ for the kernel of $x$,
we find
$$
i_{v_1}\ldots i_{v_{2n}} z^{n+1} + x\wedge i_{v_1}\ldots i_{v_{2n}} z^n = 0 \ .
$$
Next, contract this equation with $w$ in the kernel of $z$, to obtain
$$
i_w x\wedge i_{v_1}\ldots i_{v_{2n}} z^n = 0 \ .
$$
This implies that $x^{n+1}z^n$ cannot be a volume form, contradicting the metric formality of $M$.
\end{proof}

\subsection{K\"ahler structures and Hodge numbers}

The structure of the cohomology ring has the following implications for the Hodge and Chern numbers 
of K\"ahler structures:
\begin{thm}\label{t:BH}
    Let $M$ be any closed K\"ahler manifold with the cohomology ring of $F_n$. Then all its cohomology
    is of Hodge type $(p,p)$. In particular $h^{p,p}=b_{2p}$, and all other Hodge numbers vanish.

    The Chern numbers of $M$ satisfy
\begin{alignat}{1}
    c_{2n+1} &=(n+1)(n+2) \ , \\
    c_{1}c_{2n} &=(n+1)^3(n+2) \ .\label{eq:c1c2n}
    \end{alignat}
    \end{thm}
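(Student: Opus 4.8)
The plan is to derive both parts purely from Hodge theory and Hirzebruch--Riemann--Roch, feeding in only the ring structure of Proposition~\ref{p:coho} and the Betti numbers~\eqref{eq:Betti}; no geometry of $F_n$ itself is needed.

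For the statement about Hodge type I would argue as follows. By~\eqref{eq:Betti} we have $b_2(M)=2$ for $n\geq 1$, and on a Kähler manifold $b_2=2h^{2,0}+h^{1,1}$ with $h^{1,1}\geq 1$, since the Kähler class is a nonzero class of type $(1,1)$. Together these force $h^{2,0}=0$, so $H^2(M;\C)=H^{1,1}(M)$ consists entirely of $(1,1)$-classes. Now the cohomology ring of $M$ is generated in degree $2$ by Proposition~\ref{p:coho}, and cup product respects the Hodge decomposition on a Kähler manifold. Hence every class in $H^{2k}(M)$ is a sum of products of $(1,1)$-classes and therefore lies in $H^{k,k}(M)$, while the odd-degree cohomology vanishes by~\eqref{eq:Betti}. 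This gives that all cohomology is of type $(p,p)$, that $h^{p,p}=b_{2p}$, and that all other Hodge numbers vanish. This is the structurally decisive step, and it is precisely where the smallness of $b_2$ is exploited.

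For the Chern numbers, note that $M$ has complex dimension $N=2n+1$. The top Chern number equals the Euler characteristic, which depends only on the Betti numbers, so $c_{2n+1}=\chi(M)=\sum_p b_{2p}=(n+1)(n+2)$, this being the value of the Poincaré polynomial~\eqref{eq:Poincare} at $t=1$. For $c_1c_{2n}$ I would invoke the Libgober--Wood identity, equivalent to Hirzebruch--Riemann--Roch applied to the $\chi_y$-genus, which for a compact complex $N$-manifold states
$$
\sum_{p,q}(-1)^{p+q}\Bigl(p-\tfrac{N}{2}\Bigr)^2 h^{p,q}=\frac{N}{12}\,c_N+\frac{1}{6}\,c_1c_{N-1}\ .
$$
By the first part only the diagonal terms survive and $h^{p,p}=b_{2p}$, so the left-hand side collapses to the purely combinatorial weighted sum $\sum_p\bigl(p-\tfrac{N}{2}\bigr)^2 b_{2p}$. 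Solving for $c_1c_{2n}$ and substituting the value of $c_{2n+1}$ reduces the claim to evaluating this single sum.

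The remaining work is to compute $\sum_p\bigl(p-\tfrac{2n+1}{2}\bigr)^2 b_{2p}$, and here the factorization of~\eqref{eq:Poincare} carries the computation: writing $b_{2p}$ as the convolution arising from the factors $1+\cdots+t^{2n}$ and $1+\cdots+t^{2n+2}$, the weighted sum splits as the sum of the second moments of the two factors about their own centers of mass, the cross term vanishing by symmetry; each moment is an elementary closed-form sum. I expect this to yield $\sum_p\bigl(p-\tfrac{N}{2}\bigr)^2 b_{2p}=\tfrac{1}{12}(n+1)(n+2)(2n^2+6n+3)$, whence $c_1c_{2n}=(n+1)^3(n+2)$ after substitution; as a check, $n=2$ gives $108$, matching Figure~\ref{table2}. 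The only genuine obstacle is conceptual rather than computational, namely the recognition that these two Chern numbers are fixed by the Hodge numbers alone, which is exactly the content of Libgober--Wood; once that is granted, both identities follow by the elementary bookkeeping above.
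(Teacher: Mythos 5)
Your proposal is correct and follows essentially the same route as the paper: the identical $b_2=2$ argument forces $h^{2,0}=0$ and hence all cohomology of type $(p,p)$, the top Chern number is the Euler characteristic, and $c_1c_{2n}$ is extracted from the Libgober--Wood identity fed with the known Hodge numbers. Your symmetrized form of that identity and the second-moment evaluation of $\sum_p\bigl(p-\tfrac{N}{2}\bigr)^2 b_{2p}$, which indeed equals $\tfrac{1}{12}(n+1)(n+2)(2n^2+6n+3)$ and yields $c_1c_{2n}=(n+1)^3(n+2)$, is just a tidier packaging of what the paper carries out as a ``lengthy calculation involving identities for sums of binomial coefficients'' (the paper also offers, as its first option, the shortcut of evaluating $c_1c_{2n}$ on $\C P^n\times\C P^{n+1}$, which has the same Hodge numbers).
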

The statement about the Hodge structure is a generalization of the corresponding statement for homogeneous
complex structures in Lemma~\ref{l:Hodge}. Formula~\eqref{eq:c1c2n} does not hold for a non-integrable nearly
K\"ahler structure, see~\eqref{eq:nearlyc1c2n} below.
\begin{proof}
     For any K\"ahler manifold with the same cohomology ring as $F_n$ we have
     $h^{1,1}+2h^{2,0}=b_{2}(M)=b_{2}(F_n)$. As $h^{1,1}\geq 1$ and $b_{2}=2$,
     we conclude $h^{1,1}=b_{2}=2$, and $h^{2,0}=0$. By Proposition~\ref{p:coho}
     the cohomology ring is generated by $H^2(M)=H^{1,1}(M)$, and so all the
     cohomology is of type $(p,p)$.

    The top Chern number $c_{2n+1}$ is just the topological Euler number
    $P_{F_n}(-1)=(n+1)(n+2)$.

    It is known that for any compact complex manifold of complex dimension $m$ the
    Chern number $c_1c_{m-1}$ is determined by the Hodge numbers, see~\cite{LW,Sal}.
    As our $F_n$ of complex dimension $2n+1$ has the same Hodge numbers as the product
    $\C P^n\times\C P^{n+1}$, we  conclude $c_{1}c_{2n}(M)=c_{1}c_{2n}(\C P^n\times\C P^{n+1})$.
    The value of this last Chern number on $\C P^n\times\C P^{n+1}$ can be determined by a
    standard calculation.
    Alternatively, Proposition~2.3 of Libgober and Wood~\cite{LW} gives
    \begin{equation}\label{eq:LW}
    \sum_{p=2}^{2n+1}(-1)^{p}{p\choose 2}\chi_{p} =
    \frac{1}{12}\left( (2n+1)(3n-1)c_{2n+1}+c_{1}c_{2n} \right) \ ,
    \end{equation}
    with
    $$
    \chi_{p} =\sum_{q=0}^{2n+1}(-1)^q h^{p,q} \ .
    $$
    As all the cohomology is of type $(p,p)$ we obtain
    $$
    \chi_{p} =(-1)^p h^{p,p}=(-1)^pb_{2p} \ .
    $$
Substituting the values of the Betti numbers from~\eqref{eq:Betti}, and plugging the result into~\eqref{eq:LW},
a lengthy calculation involving identities for sums of binomial coefficients leads to~\eqref{eq:c1c2n}.
\end{proof}
The other Chern numbers are not in general determined by the Hodge numbers, and may vary with the complex
structure under consideration. We carry out the relevant calculations in the next two sections.

\section{Chern numbers for the standard complex structure}\label{s:standard}

Here is the general formula for the Chern classes of the standard complex structure on $F_n$:
\begin{prop}\label{tcc}
The total Chern class of $F_n$ is
\begin{equation}
c(F_n) = \frac{(1+x)^{n+2}(1+x+y)^{n+2}}{1+y} \ .
\end{equation}
\end{prop}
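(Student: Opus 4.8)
The plan is to exploit the description of $F_n$ as the projectivized tangent bundle $\PP(T\C P^{n+1})$ afforded by the fibration $p\colon F_n \lra \C P^{n+1}$, and to compute the total Chern class by splitting $TF_n$ into its vertical and horizontal parts. Writing $E=T\C P^{n+1}$, a bundle of rank $n+1$, the short exact sequence of tangent bundles
$$
0 \lra T_p \lra TF_n \lra p^*E \lra 0
$$
(where $T_p$ is the tangent bundle along the fibers of $p$) gives, by multiplicativity of the total Chern class, $c(F_n)=c(T_p)\cdot p^*c(E)$. The horizontal factor is immediate: by the Euler sequence on $\C P^{n+1}$ the bundle $E$ has total Chern class $(1+H)^{n+2}$, and since $x=p^*H$ we get $p^*c(E)=(1+x)^{n+2}$. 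Everything thus reduces to identifying $c(T_p)$ with $(1+x+y)^{n+2}/(1+y)$.

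For the vertical factor I would use the relative Euler sequence. Let $S\subset p^*E$ be the tautological line subbundle of the projectivization and set $y=c_1(S^*)$, the tautological class restricting to the hyperplane class on each fiber. The fiberwise identification of the tangent space to $\PP(E_b)$ at a line $\ell$ with $\Hom(\ell,E_b/\ell)=\Hom(\ell,E_b)/\Hom(\ell,\ell)$ yields the relative Euler sequence
$$
0 \lra \OO \lra S^*\otimes p^*E \lra T_p \lra 0 ,
$$
so $c(T_p)=c(S^*\otimes p^*E)$. To evaluate this I would pull the Euler sequence $0\to\OO\to\OO(1)^{\oplus(n+2)}\to E\to 0$ of $\C P^{n+1}$ back to $F_n$ and tensor it by $S^*$. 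As $S^*\otimes p^*\OO(1)$ is a line bundle with $c_1=x+y$, this gives
$$
c(T_p)=c(S^*\otimes p^*E)=\frac{c\bigl(S^*\otimes p^*\OO(1)^{\oplus(n+2)}\bigr)}{c(S^*)}=\frac{(1+x+y)^{n+2}}{1+y},
$$
and multiplying by $(1+x)^{n+2}$ produces the asserted formula.

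The one delicate point — and the main obstacle — is matching conventions, namely checking that the class $y=c_1(S^*)$ used here is genuinely the tautological class $y$ of Proposition~\ref{p:coho} and not its negative or a twist of it. This is where I would pin things down: writing $Q=p^*E/S$ for the rank-$n$ quotient, the vanishing $c_{n+1}(Q)=0$, computed from $c(Q)=c(p^*E)/(1+c_1(S))=c(p^*E)(1+y+y^2+\cdots)$, gives exactly $\sum_{i=0}^{n+1}c_i(E)\,y^{n+1-i}=0$, i.e. $\bigl((x+y)^{n+2}-x^{n+2}\bigr)/y=0$, which is precisely relation~\eqref{eq:relFn}. This identifies $y$ unambiguously. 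As a numerical sanity check one can expand the formula in low dimensions; for $F_1$ it yields $c_1=6x+2y$, whence, using~\eqref{eq:exp}, $c_1^3=48$, in agreement with the classical value for the complete flag manifold.
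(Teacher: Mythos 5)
Your proof is correct and takes essentially the same route as the paper: both split $TF_n$ along the holomorphic fibration $p$ into $p^*(T\C P^{n+1})$ (with Chern class $(1+x)^{n+2}$) and the relative tangent bundle $Tp$, and both extract $c(Tp)=(1+x+y)^{n+2}/(1+y)$ from the tautological/Euler sequences — the paper uses the sequence $L^{-1}\to p^*(T\C P^{n+1})\to L^{-1}\otimes Tp$ with a formal expansion of $1/(1+y)$, which is equivalent to your relative Euler sequence computation. Your extra step pinning down the convention $y=c_1(S^*)$ against the relation of Proposition~\ref{p:coho} is a sensible addition that the paper leaves implicit.
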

\begin{proof}
The fibration $p\colon F_n\rightarrow \C P^{n+1}$ is holomorphic, so we can calculate $c(F_n)$ as the product of
the total Chern classes of $p^*(T\C P^{n+1})$ and of $Tp$, the tangent bundle along the fibers. As mentioned
above, the total Chern class of $p^*(T\C P^{n+1})$ is $(1+x)^{n+2}$. For the calculation of $c(Tp)$ consider the
exact sequence
$$
L^{-1}\longrightarrow p^*(T\C P^{n+1})\longrightarrow L^{-1}\otimes Tp \ ,
$$
where $L$ is the fiberwise hyperplane bundle on the total space. As $c_1(L^{-1})=-y$, we can write formally
$$
c(Tp) = (1+x+y)^{n+2}(1-y+y^2-y^3+\ldots ) = \frac{(1+x+y)^{n+2}}{1+y}  \ .
$$
\end{proof}
Combining this with Proposition~\ref{p:coho}, one can calculate all the Chern numbers of $F_n$. The calculation is
completely elementary, but very tedious. It gives results like the following:
\begin{thm}\label{t:C}
For the standard complex structure on $F_n$, the projectivization of the tangent bundle of $\C P^{n+1}$, we have
\begin{equation}
c_1^{2n+1}(F_n) = 2 (n+1)^{n} (n+3)^{n} {2n+1 \choose n} \ ,
\end{equation}
\begin{equation}\label{eq:c1c2}
c_1^{2n-1}c_2(F_n) = 4 (n^4+7n^3+17n^2+16n+7)(n+1)^{n-2} (n+3)^{n-2} {2n-1 \choose n} \ .
\end{equation}
\end{thm}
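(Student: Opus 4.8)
The plan is to read off the relevant Chern classes from Proposition~\ref{tcc} and then evaluate the resulting top-degree monomials using the ring structure of Proposition~\ref{p:coho}, after passing to the more convenient generators $x$ and $z=x+y$. The degree-$2$ part of $c(F_n)=(1+x)^{n+2}(1+x+y)^{n+2}/(1+y)$ is
\[
c_1 = 2(n+2)x+(n+1)y = (n+3)x+(n+1)z \ ,
\]
and the degree-$4$ part gives $c_2$ as an explicit quadratic form, which in the new generators reads
\[
c_2 = \tfrac{n^2+5n+8}{2}\,x^2+(n^2+4n+2)\,xz+\tfrac{n^2+n}{2}\,z^2 \ .
\]
The point of using $z=x+y$ is that, by \eqref{eq:relFn}, we have $z^{n+2}=(x+y)^{n+2}=0$ as well as $x^{n+2}=0$, so both generators are nilpotent of order $n+2$.

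The key observation is that, because $x$ and $z$ are both nilpotent of order $n+2$, a top-degree monomial $x^az^b$ with $a+b=2n+1$ can be nonzero only when $a$ and $b$ both lie in $\{n,n+1\}$, that is only for $x^{n+1}z^n$ and $x^nz^{n+1}$. From \eqref{eq:exp} one computes the two surviving integrals directly:
\[
\int_{F_n}x^{n+1}z^n = 1 \ , \qquad \int_{F_n}x^nz^{n+1} = -1 \ .
\]
Consequently, expanding $c_1^{2n+1}=((n+3)x+(n+1)z)^{2n+1}$ by the binomial theorem leaves only the two terms $x^{n+1}z^n$ and $x^nz^{n+1}$, and since $\binom{2n+1}{n}=\binom{2n+1}{n+1}$ these combine into
\[
c_1^{2n+1} = \binom{2n+1}{n}(n+1)^n(n+3)^n\bigl[(n+3)-(n+1)\bigr] = 2(n+1)^n(n+3)^n\binom{2n+1}{n} \ ,
\]
which is the first formula.

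For $c_1^{2n-1}c_2$ I would run the identical scheme: expand $c_1^{2n-1}$ and multiply by the quadratic expression for $c_2$; by the same nilpotency every product collapses onto $x^{n+1}z^n$ and $x^nz^{n+1}$, so that only six binomial terms survive in total. Collecting their coefficients against the two integrals $\pm 1$ yields a closed expression. The expected obstacle is the purely algebraic simplification that extracts the factor $n^4+7n^3+17n^2+16n+7$ together with the powers $(n+1)^{n-2}(n+3)^{n-2}$; this step is elementary but error-prone. To guard against slips I would cross-check the output against the tabulated values for $n=2$ and $n=3$ in Figures~\ref{table2} and~\ref{table3}, and against the Hirzebruch--Riemann--Roch relations already exploited in Section~\ref{s:ChernLie}. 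The same method evidently produces closed formulae for any fixed Chern number dominated by powers of $c_1$, the only growth in difficulty being the number of surviving binomial terms.
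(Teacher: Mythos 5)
Your proposal is correct and takes essentially the same route as the paper: the paper likewise writes $c_1=(n+1)(x+y)+(n+3)x$, exploits the nilpotency $x^{n+2}=(x+y)^{n+2}=0$ so that only the monomials $x^{n+1}(x+y)^n$ and $x^n(x+y)^{n+1}$ survive the binomial expansion, and evaluates these via~\eqref{eq:exp}; your substitution $z=x+y$ and the precomputed integrals $\int x^{n+1}z^n=1$, $\int x^nz^{n+1}=-1$ are just a repackaging of that step. Your six-term scheme for $c_1^{2n-1}c_2$, using exactly the quadratic expression for $c_2$ that the paper gives, is also the paper's (unwritten) ``similar calculation,'' and it does collapse to the stated formula~\eqref{eq:c1c2}.
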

\begin{proof}
From Proposition~\ref{tcc} we have
$$
c_1(F_n)=(n+1)(x+y)+(n+3)x \ .
$$
Using the relations $x^{n+2}=0=(x+y)^{n+2}$ and~\eqref{eq:exp}, this gives
\begin{alignat*}{1}
c_1^{2n+1}(F_n) &={2n+1 \choose n}\left( (n+1)^{n+1}(n+3)^nx^n(x+y)^{n+1}+(n+1)^n(n+3)^{n+1}x^{n+1}(x+y)^n\right) \\
&=(n+1)^n(n+3)^n{2n+1 \choose n}\left( (n+1)x^n(x+y)^{n+1}+(n+3)x^{n+1}(x+y)^n\right) \\
&=(n+1)^n(n+3)^n{2n+1 \choose n}\left( (n+1)(x^ny^{n+1}+(n+1)x^{n+1}y^n)+(n+3)x^{n+1}y^n\right) \\
&=(n+1)^n(n+3)^n{2n+1 \choose n}\cdot 2 \  .
\end{alignat*}
A similar calculation proves~\eqref{eq:c1c2} using the expression 
$$
c_2(F_n) = \frac{1}{2}(n^2+5n+8)x^2+(n^2+4n+2)x(x+y)+{n+1 \choose 2}(x+y)^2
$$ 
obtained from Proposition~\ref{tcc}.
\end{proof}
For $n=1$ Theorem~\ref{t:C} gives $c_1^3(F_1)=48$ and $c_1c_2(F_1)=24$. The latter value is actually determined
by the Hodge numbers, and can be obtained from~\eqref{eq:c1c2n} as well.

\subsection{The case $n=2$}
Here Theorem~\ref{t:BH} gives $c_5(F_2)=12$ and $c_1c_4(F_2)=108$. Theorem~\ref{t:C} gives us
$c_1^5(F_2)=4500$, which checks with the value given in~\cite{BH,H05}, and $c_1^3c_2(F_2)=2148$. In this case
it remains to calculate $c_1c_2^2$, $c_1^2c_3$ and $c_2c_3$. We do this in some detail
in order to illustrate some shortcuts in calculations making Proposition~\ref{tcc} explicit. These shortcuts are
useful when calculating for $F_n$ with larger $n$.

The tangent bundle of $F_n$ has a complex splitting into $p^*(T\C P^{n+1})$ and $Tp$, which have almost equal ranks.
Therefore, computing certain Chern classes of $F_n$ using the Whitney sum formula there are not too many summands.
By the proof of Proposition~\ref{tcc}, the total Chern class of $Tp$ is
$$
c(Tp) = \frac{(1+x+y)^{n+2}}{1+y} \ .
$$
However, as the rank of $Tp$ is $n$, we can truncate this at terms of degree $n$. In the case at hand $n=2$, and we
have
$$
c(Tp) = 1+ (4x+3y) + (6x^2+8xy+3y^2) \ .
$$
Combining this with $c(p^*(T\C P^{n+1}))=1+4x+6x^2+4x^3$ and using the Whitney sum formula, we find
\begin{alignat*}{1}
c_1(F_2) &= 8x+3y \ , \\
c_2(F_2) &=28x^2+20xy+3y^2 \ , \\
c_3(F_2) &=52x^3+50x^2y+12xy^2 \ .
\end{alignat*}
Multiplying out using $x^4=0$, and substituting from~\eqref{eq:exp}, we quickly obtain the numbers given in the 
first column of the table in the introduction.

\subsection{The case $n=3$}
Here Theorem~\ref{t:BH} gives $c_7(F_3)=20$ and $c_1c_6(F_3)=320$. Theorem~\ref{t:C} gives us
$c_1^7(F_3)=967680$ and $c_1^5c_2(F_3)=458880$. We have completed the calculation of all the Chern
numbers in this case using the procedure outlined above. The results are presented in the table in Figure~\ref{table3}.
We shall not reproduce the details of the calculation here, but we mention some of the intermediary steps.

Here $TF_3=p^*(T\C P^4)\oplus Tp$ splits as a direct sum of complex vector bundles of rank $4$ and $3$
respectively. To find the Chern classes of $Tp$ we look at
$$
c(Tp) = \frac{(1+x+y)^{5}}{1+y}
$$
and ignore all terms of degree larger than $3$ to obtain
$$
c(Tp) = 1+ (5x+4y)+(10x^2+15xy+6y^2)+(10x^3+20x^2y+15xy^2+4y^3) \ .
$$
Multiplying $(1+x)^5$ with $c(Tp)$ and using $x^5=0$ together with~\eqref{eq:exp}, we find, in addition to
$c_1(F_3)=10x+4y$ and $c_2(F_3)= 45x^2+35xy+6y^2$, which were already mentioned in the proof of 
Theorem~\ref{t:C}, the following:
\begin{alignat*}{1}
c_3(F_3) &= 120x^3+135x^2y+45xy^2+4y^3 \ , \\
c_4(F_3) &= 5(41x^4+58x^3y+27x^2y^2+4xy^3) \ , \\
c_5(F_3) &= 10(37x^4y+21x^3y^2+4x^2y^3) \ .
\end{alignat*}
Multiplying out and again using~\eqref{eq:exp}, we obtain the numbers given in the table in Figure~\ref{table3}.

\section{Chern numbers of the twistor space}\label{ss:Cherntwistor}

Let us denote by $Z_n$ the twistor space of the Grassmannian $G_n$. Then $Z_n$ is diffeomorphic to $F_n$, 
the projectivization of $T\C P^{n+1}$, but has a different complex structure, as described in Section~\ref{s:cx}. 
The complex structure of the twistor space is in fact given by the projectivization of the cotangent bundle of 
$\C P^{n+1}$, and the holomorphic contact structure of the twistor space mentioned in Section~\ref{s:cx} is the 
tautological contact structure of $\PP (T^*\C P^{n+1})$; see~\cite{Boothby2,Wolf,Bianchi,LeS}.

To calculate the Chern numbers of $Z_n=\PP (T^*\C P^{n+1})$ we shall follow the same approach as 
for the projectivization of the tangent bundle. First we write down the cohomology ring in a way which is adapted to 
the projectivization of the cotangent bundle:
\begin{prop}\label{p:dualcoho}
The cohomology ring of $Z_n$ is generated by two elements $x$ and $z$ of degree $2$, subject to the relations
\begin{equation}\label{eq:relZn}
x^{n+2}=0 \ , \qquad \ \frac{(x-z)^{n+2}-x^{n+2}}{z} = 0 \ .
\end{equation}
\end{prop}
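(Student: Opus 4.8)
The plan is to run the proof of Proposition~\ref{p:coho} again, with the cotangent bundle in place of the tangent bundle. First I would consider the projection $\pi\colon Z_n=\PP(T^*\C P^{n+1})\rightarrow\C P^{n+1}$ and set $x=\pi^*(H)$, the pullback of the hyperplane class, so that $x^{n+2}=0$ for dimension reasons. Let $z=c_1(\OO(1))$ be the tautological class restricting to the hyperplane class on each fiber. By the projective bundle formula, $H^{*}(Z_n)$ is a free module over $H^{*}(\C P^{n+1})=\R[x]/(x^{n+2})$ on $1,z,\ldots,z^{n}$, and is therefore presented by $x^{n+2}=0$ together with the single Grothendieck relation that expresses $z^{n+1}$ through the lower powers of $z$.

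The one genuinely new ingredient is the total Chern class of the cotangent bundle. Since $c_i(E^{*})=(-1)^ic_i(E)$ and $c(T\C P^{n+1})=(1+H)^{n+2}$, I would obtain $c(T^{*}\C P^{n+1})=(1-H)^{n+2}$, so that $\pi^{*}c_i(T^{*}\C P^{n+1})=(-1)^i\binom{n+2}{i}x^i$. The Grothendieck relation for the rank-$(n+1)$ bundle $T^{*}\C P^{n+1}$ then reads
\[
\sum_{i=0}^{n+1}(-1)^i\binom{n+2}{i}\,x^iz^{n+1-i}=0 \ .
\]

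It then remains only to recognise this as the stated relation. Expanding $(x-z)^{n+2}$ by the binomial theorem, the constant-in-$z$ term $x^{n+2}$ is cancelled by $-x^{n+2}$, every surviving monomial is divisible by $z$, and after dividing one reads off exactly the coefficients $(-1)^k\binom{n+2}{k}$ appearing above, up to the overall sign $(-1)^n$ produced by reindexing the sum. Since $(-1)^n$ is a unit, the two relations generate the same ideal, so $\big((x-z)^{n+2}-x^{n+2}\big)/z=0$ is an equally valid presentation. I expect this sign and reindexing bookkeeping, together with fixing the convention for $z$, to be the only point needing care; everything else is the standard topology of projective bundles, and, exactly as for Proposition~\ref{p:coho}, the whole argument goes through over $\Z$.
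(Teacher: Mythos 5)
Your proposal is correct and is exactly the argument the paper intends: the paper omits the proof of Proposition~\ref{p:dualcoho}, stating it is the same as that of Proposition~\ref{p:coho}, and you have simply carried that out with $T^{*}\C P^{n+1}$ in place of $T\C P^{n+1}$, using $c(T^{*}\C P^{n+1})=(1-H)^{n+2}$ in the Grothendieck relation. Your handling of the sign $(-1)^n$ (a unit, so the ideal is unchanged) is the right way to reconcile the Grothendieck relation with the form $\bigl((x-z)^{n+2}-x^{n+2}\bigr)/z=0$ stated in the Proposition.
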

We omit  the proof because it is exactly the same as that of Proposition~\ref{p:coho}.
The Proposition holds for integral coefficients, so the class $x^{n+1}z^n$ generates the top degree integral
cohomology of $Z_n$. Monomials of the form $x^mz^{2n+1-m}$ vanish if $m>n+1$. The remaining relations in
top degree are given explicitly by
\begin{equation}\label{eq:dualexp}
x^{n+1-k}z^{n+k} = {n+1+k \choose k} x^{n+1}z^n \qquad \textrm{for} \qquad k\leq n+1 \ .
\end{equation}
This can easily be derived from~\eqref{eq:relZn} by induction on $k$.

Next we determine the total Chern class of $Z_n$.
\begin{prop}\label{dualtcc}
In the generators $x$ and $z$ from Proposition~\ref{p:dualcoho}, the total Chern class of $Z_n$ is
\begin{equation}
c(Z_n) = \frac{(1+x)^{n+2}(1-x+z)^{n+2}}{1+z} \ .
\end{equation}
\end{prop}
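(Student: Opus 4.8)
The plan is to mirror the proof of Proposition~\ref{tcc} verbatim, replacing the tangent bundle of $\C P^{n+1}$ by its cotangent bundle. Write $q\colon Z_n=\PP(T^*\C P^{n+1})\to\C P^{n+1}$ for the holomorphic projection, let $x=q^*(H)$ be the pullback of the hyperplane class, and let $z$ be the tautological class from Proposition~\ref{p:dualcoho}, i.e.\ the first Chern class of the fibrewise hyperplane bundle $L$ on the total space. Since $q$ is holomorphic, the fibration sequence $0\to Tq\to TZ_n\to q^*(T\C P^{n+1})\to 0$ gives, by the Whitney sum formula, $c(Z_n)=c(Tq)\cdot c(q^*(T\C P^{n+1}))$, where $Tq$ is the tangent bundle along the fibres. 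The second factor is $(1+x)^{n+2}$ because the total Chern class of $\C P^{n+1}$ is $(1+H)^{n+2}$. It therefore remains to identify $c(Tq)$ with $(1-x+z)^{n+2}/(1+z)$.

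For this I would use the relative Euler sequence in its twisted form
$$
0\lra\OO\lra q^*(T^*\C P^{n+1})\otimes L\lra Tq\lra 0 \ ,
$$
exactly as in the tangent case, where the corresponding sequence produced the factor $(1+x+y)^{n+2}/(1+y)$. Dualizing the Euler sequence of $\C P^{n+1}$ shows that $T^*\C P^{n+1}$ is formally $\OO(-1)^{\oplus(n+2)}\ominus\OO$, so its pullback has total Chern class $(1-x)^{n+2}$ with formal Chern roots given by $n+2$ copies of $-x$ together with a compensating trivial summand. Tensoring with $L$ adds $z=c_1(L)$ to each root, and applying the Whitney formula to the displayed sequence gives $c(Tq)=(1-x+z)^{n+2}/(1+z)$. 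Multiplying by $(1+x)^{n+2}$ yields the claimed formula.

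The only point requiring care — and the analogue of the sole subtlety in Proposition~\ref{tcc} — is the bookkeeping of signs. Dualizing sends $+x$ to $-x$, which is why the numerator carries $(1-x+z)$ rather than $(1+x+z)$, while the fibre class $z$ must enter with a plus sign. I would confirm this by checking that the tautological subbundle $L^{-1}\hookrightarrow q^*(T^*\C P^{n+1})$ yields the Grothendieck relation $\sum_{i=0}^{n+1}(-1)^i\binom{n+2}{i}x^i z^{n+1-i}=0$, which is exactly $((x-z)^{n+2}-x^{n+2})/z=0$ up to the overall sign $(-1)^n$ that does not affect its vanishing. This verifies that the $z$ used here is precisely the generator of Proposition~\ref{p:dualcoho}, closing the one gap left implicit by the phrase ``the same approach as for the projectivization of the tangent bundle.''
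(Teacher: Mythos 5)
Your proposal is correct and is essentially identical to the paper's proof: the paper omits the argument entirely, stating it is ``exactly the same as the proof of Proposition~\ref{tcc}'', and what you have written is precisely that proof transcribed for the cotangent bundle, with the relative Euler sequence twisted by $L$ and the formal identification of $q^*(T^*\C P^{n+1})$ with $\OO(-1)^{\oplus(n+2)}\ominus\OO$ producing the factor $(1-x+z)^{n+2}/(1+z)$. Your additional consistency check that the Grothendieck relation reproduces the presentation of Proposition~\ref{p:dualcoho} (up to the harmless sign $(-1)^n$) is a small but welcome verification that the paper leaves implicit.
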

Again we omit the proof, because it is exactly the same as the proof of Proposition~\ref{tcc}.

\begin{rem}
The complex anti-linear isomorphism between $T\C P^{n+1}$ and $T^*\C P^{n+1}$ induces a diffeomorphism between
$F_n$ and $Z_n$ which pulls back $x$ to $x$ and $z$ to $-y$. In the basis $x$ and $y$ which we used for $F_n$,
the total Chern class of $Z_n$ is:
$$
c(Z_n) = \frac{(1+x)^{n+2}(1-x-y)^{n+2}}{1-y} \ .
$$
Note that the relations in the cohomology ring are neater when  expressed in terms of $x$ and $z$, rather than in terms of 
$x$ and $y$. On the top degree generators, $x^{n+1}z^n$ is pulled back to $(-1)^nx^{n+1}y^n$. Thus, the diffeomorphism is 
orientation-preserving if and only if $n$ is even. For $n$ odd we get different generators in top degree, and we may have to 
replace one of the complex structures by its conjugate to get the same orientation. 
\end{rem}

Combining Propositions~\ref{p:dualcoho} and~\ref{dualtcc}, one can calculate all the Chern numbers of $Z_n$. 
The calculation is again completely elementary, but rather tedious, although it is a little less so than for the standard
complex structure, due to the more convenient presentation of the cohomology ring, and an easier to handle formula
for the first Chern class. This calculation leads to results like the following:
\begin{thm}\label{t:dualC}
For the projectivization $Z_n$ of the cotangent bundle of $\C P^{n+1}$, we have
\begin{equation}\label{eq:dualc1}
c_1^{2n+1}(Z_n) = (n+1)^{2n+1}  {2n+2 \choose n+1} \ ,
\end{equation}
\begin{equation}\label{eq:dualc1c2}
c_1^{2n-1}c_2(Z_n) = 2 (n+1)^{2n-1} (n^2+n+1) {2n \choose n} \ ,
\end{equation}
\begin{equation}\label{eq:dualc1c22}
c_1^{2n-3}c_2^2(Z_n) = (n+1)^{2n-3} n (4n^3+8n^2+10n+5){2n-2 \choose n-1} \ .
\end{equation}
\end{thm}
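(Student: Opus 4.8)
The plan is to follow exactly the strategy used in the proof of Theorem~\ref{t:C}, exploiting that Propositions~\ref{p:dualcoho} and~\ref{dualtcc} are phrased in the generators $x$ and $z$ adapted to the cotangent bundle. First I would read off $c_1(Z_n)$ as the degree-$2$ part of $c(Z_n)=\frac{(1+x)^{n+2}(1-x+z)^{n+2}}{1+z}$: the three factors contribute $(n+2)x$, $(n+2)(z-x)$ and $-z$ (from $\frac{1}{1+z}=1-z+z^2-\cdots$), so that $c_1(Z_n)=(n+1)z$. The decisive simplification, in contrast to the tangent-bundle case where $c_1=(n+1)(x+y)+(n+3)x$, is that here $c_1$ is a pure multiple of $z$; hence $c_1^m=(n+1)^mz^m$ and the reduction formula~\eqref{eq:dualexp} converts such powers into multiples of the top generator $x^{n+1}z^n$ in one step. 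In particular, taking $k=n+1$ in~\eqref{eq:dualexp} gives $z^{2n+1}={2n+2 \choose n+1}x^{n+1}z^n$, which yields~\eqref{eq:dualc1} immediately.

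For the other two formulae I would first extract $c_2(Z_n)$ as the degree-$4$ part of the same product; a short computation (conveniently organized via $\log c(Z_n)$) gives
$$
c_2(Z_n)=-(n+2)x^2+(n+2)xz+\tfrac{n(n+1)}{2}z^2 \ .
$$
Multiplying by $c_1^{2n-1}=(n+1)^{2n-1}z^{2n-1}$ and reducing the monomials $x^2z^{2n-1}$, $xz^{2n}$, $z^{2n+1}$ via~\eqref{eq:dualexp} (taking $k=n-1,n,n+1$) writes $c_1^{2n-1}c_2$ as $(n+1)^{2n-1}$ times a linear combination of ${2n \choose n-1}$, ${2n+1 \choose n}$ and ${2n+2 \choose n+1}$. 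For~\eqref{eq:dualc1c22} I would instead square $c_2$, multiply by $c_1^{2n-3}=(n+1)^{2n-3}z^{2n-3}$, and reduce the five monomials $x^4z^{2n-3},\ x^3z^{2n-2},\ x^2z^{2n-1},\ xz^{2n},\ z^{2n+1}$, producing a combination of ${2n-2 \choose n-3}$, ${2n-1 \choose n-2}$, ${2n \choose n-1}$, ${2n+1 \choose n}$ and ${2n+2 \choose n+1}$ with polynomial-in-$n$ coefficients.

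The main obstacle is purely combinatorial: one must verify that these weighted sums of binomial coefficients collapse to the stated closed forms $2(n^2+n+1){2n \choose n}$ and $n(4n^3+8n^2+10n+5){2n-2 \choose n-1}$. My approach would be to normalize every binomial to a single reference coefficient --- ${2n \choose n}$ for~\eqref{eq:dualc1c2} and ${2n-2 \choose n-1}$ for~\eqref{eq:dualc1c22} --- using elementary ratio identities such as ${2n \choose n-1}=\frac{n}{n+1}{2n \choose n}$ and ${2n+2 \choose n+1}=\frac{2(2n+1)}{n+1}{2n \choose n}$, after which each identity reduces to an equality of rational functions of $n$ that can be checked directly. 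The $c_2^2$ case is the more laborious one, mixing five binomials with quadratic coefficients, but it remains a finite algebraic manipulation; one only needs to respect $x^{n+2}=0$ (equivalently ${2n-2 \choose n-3}=0$ when $n=2$) so that~\eqref{eq:dualc1c22} holds throughout its natural range $n\geq 2$.
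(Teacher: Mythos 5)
Your proposal is correct and takes essentially the same approach as the paper: the paper's proof likewise reads off $c_1(Z_n)=(n+1)z$ and $c_2(Z_n)=-(n+2)x^2+(n+2)xz+{n+1 \choose 2}z^2$ from Proposition~\ref{dualtcc} and then ``computes mechanically using the relations~\eqref{eq:dualexp}''. Your write-up merely makes that mechanical step explicit, and the details check out, including the reduction of $z^{2n+1}$ via $k=n+1$ in~\eqref{eq:dualexp}, the normalization of all binomials to a single reference coefficient, and the observation that the term with ${2n-2 \choose n-3}$ vanishes when $n=2$.
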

\begin{proof}
The previous Proposition gives in particular $c_1(Z_n)=(n+1)z$ and 
$$
c_2(Z_n) = -(n+2)x^2+(n+2)xz+ {n+1 \choose 2}z^2 \ .
$$
From this one computes mechanically using the relations~\eqref{eq:dualexp}.
\end{proof}

For $n=1$ we find $c_1^3(Z_1)=48$ and $c_1c_2(Z_1)=24$. These are of course the same values as for $F_1$, 
compare Subsection~\ref{ss:complete}. 
However, for larger $n$ we find in particular:
\begin{cor}
For all $n>1$ one has $c_1^{2n+1}(Z_n)\neq c_1^{2n+1}(F_n)$. 
\end{cor}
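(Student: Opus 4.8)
The plan is to read off the corollary directly from the closed formulas established in Theorems~\ref{t:C} and~\ref{t:dualC}, thereby reducing the claimed inequality of Chern numbers to a purely elementary comparison of two exponential expressions. First I would write down the two values,
$$
c_1^{2n+1}(F_n) = 2(n+1)^n(n+3)^n\binom{2n+1}{n} , \qquad c_1^{2n+1}(Z_n) = (n+1)^{2n+1}\binom{2n+2}{n+1} .
$$
The apparent obstacle is that these involve different binomial coefficients, so the first step is to reconcile them. Since $\binom{2n+2}{n+1} = \frac{2n+2}{n+1}\binom{2n+1}{n} = 2\binom{2n+1}{n}$, both Chern numbers share the common positive factor $2\binom{2n+1}{n}$, and their quotient collapses to
$$
\frac{c_1^{2n+1}(Z_n)}{c_1^{2n+1}(F_n)} = \frac{(n+1)^{2n+1}}{(n+1)^n(n+3)^n} = \frac{(n+1)^{n+1}}{(n+3)^n} .
$$
Hence the two Chern numbers agree if and only if $(n+1)^{n+1} = (n+3)^n$.

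With this reduction in hand, I would analyze the scalar equation $(n+1)^{n+1} = (n+3)^n$. For $n=1$ both sides equal $4$, which recovers the coincidence $c_1^3(Z_1) = c_1^3(F_1) = 48$ noted just before the corollary. For $n \geq 2$ I claim the strict inequality $(n+1)^{n+1} > (n+3)^n$, which not only separates the two Chern numbers but in fact shows $c_1^{2n+1}(Z_n) > c_1^{2n+1}(F_n)$. Dividing by $(n+1)^n$, this is equivalent to $n+1 > \left(1 + \frac{2}{n+1}\right)^n$.

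The only nontrivial point is this last inequality, and it is where I expect the real work to lie. The right-hand side is uniformly bounded: using the standard estimate $(1+2/m)^m < e^2$ with $m = n+1$, one gets $\left(1 + \frac{2}{n+1}\right)^n < \left(1 + \frac{2}{n+1}\right)^{n+1} < e^2 < 8$ for every $n$. Consequently, as soon as $n+1 \geq 8$, that is for $n \geq 7$, the left-hand side already exceeds the bound and the inequality holds. The finitely many remaining cases $n = 2,3,4,5,6$ are then disposed of by direct arithmetic (for instance $3^3 = 27 > 25 = 5^2$ when $n=2$, and $4^4 = 256 > 216 = 6^3$ when $n=3$). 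Assembling these observations gives $c_1^{2n+1}(Z_n) \neq c_1^{2n+1}(F_n)$ for all $n > 1$, as required.
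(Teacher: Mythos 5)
Your proposal is correct and follows the paper's own route: the corollary is read off by comparing the closed formulas of Theorems~\ref{t:C} and~\ref{t:dualC}, which is exactly what the paper does (it states the corollary immediately after Theorem~\ref{t:dualC} with no further argument). You merely make explicit the elementary comparison the paper leaves implicit, reducing it to $(n+1)^{n+1}\neq(n+3)^n$ for $n\geq 2$ and in fact proving the sharper statement $c_1^{2n+1}(Z_n)>c_1^{2n+1}(F_n)$.
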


\begin{ex}
For $n=2$ Theorem~\ref{t:dualC} gives $c_1c_2^2(Z_2)=1068$, $c_1^3c_2(Z_2)=2268$ and $c_1^5(Z_2) = 4860$. This last 
value checks with a value given in~\cite{BH,H05}. From Theorem~\ref{t:BH} we have $c_1c_4(Z_2)=108$, so that the only Chern
numbers left to compute for $Z_2$ are $c_1^2c_3$ and $c_2c_3$. Using the procedure applied to $F_2$ we determine an 
explicit formula for $c_3(Z_2)$, and multiplying out gives the numbers in the middle column of the table in Figure~\ref{table2}. 
\end{ex}

\begin{ex}
For $n=3$ Theorems~\ref{t:BH} and~\ref{t:dualC} tell us some of the Chern numbers. To calculate all of them we can apply
the method  outlined in the previous section. For the Chern classes we know already that $c_1(Z_3)=4z$ and 
$c_2(Z_3)=-5x^2+5xz+6z^2$, and now we find
\begin{alignat*}{1}
c_3(Z_3) &= -15x^3+15xz^2+4z^3 \ , \\
c_4(Z_3) &= 5(x^4-2x^3z-3x^2z^2+4xz^3) \ , \\
c_5(Z_3) &= 10(7x^4z-9x^3z^2+4x^2z^3) \ .
\end{alignat*}
This leads to the Chern numbers given in the middle column of the table in Figure~\ref{table3}.
\end{ex}

To end this section we discuss the relationship between our calculations and a special case of those of Semmelmann and Weingart~\cite{SW}.
The holomorphic line bundle $L$ on the twistor space is ample, because $L^{n+1}=K^{-1}$ and $K^{-1}$ is ample for 
any complex manifold with a K\"ahler--Einstein metric of positive scalar curvature. Thus one can consider $(Z_n,L)$ as a 
polarised projective algebraic variety with Hilbert polynomial
$$
P(r)=\chi (Z_n,\OO (L^r))=\sum_{i=0}^{2n+1}(-1)^i\dim_{\C}H^i(Z_n,\OO (L^r)) \ .
$$
By the Hirzebruch--Riemann--Roch theorem, this can be calculated as
$$
P(r) = \langle ch(L^r)Todd(Z_n),[Z_n]\rangle \ ,
$$
which is a polynomial of degree $2n+1$ in $r$. As we know all the Chern classes of $Z_n$ and the Chern class of $L$,
we can in principle calculate the Hilbert polynomial. Conversely, if we know the Hilbert polynomial, then we can read off all the
combinations of Chern numbers which appear in it as coefficients of powers of $r$. Let us just write out the terms
of highest degree in $r$:
\begin{alignat*}{1}
P(r) = &\frac{1}{(2n+1)! \ (n+1)^{2n+1}}c_1(Z_n)^{2n+1}r^{2n+1}\\
+ &\frac{1}{2 \ (2n)! \ (n+1)^{2n}}c_1(Z_n)^{2n+1}r^{2n}\\
+ &\frac{1}{12 \ (2n-1)! \ (n+1)^{2n-1}}(c_1(Z_n)^{2n+1}+c_1(Z_n)^{2n-1}c_2(Z_n))r^{2n-1}+...
\end{alignat*}
Now Semmelmann and Weingart~\cite{SW} have calculated the Hilbert polynomial of the twistor space of the
Grassmannian explicitly:
$$
P(r) = \frac{n+2r+1}{n+1}{n+r \choose r}^2 \ .
$$
Expanding this in powers of $r$ we find:
$$
 \frac{n+2r+1}{n+1}{n+r \choose r}^2= \frac{2}{n! \ (n+1)!}r^{2n+1}
+ \frac{2n+1}{(n!)^2}r^{2n} + \frac{3n^2+4n+2}{3 \ (n-1)! \ n!}r^{2n-1}+...
$$
Comparing the coefficients of $r^{2n+1}$ in the two expansions, we find:
\begin{equation}\label{eq:SW1}
c_1^{2n+1}(Z_n) = 2(n+1)^{2n+1}{2n+1 \choose n} \ .
\end{equation}
This agrees with~\eqref{eq:dualc1}.

One can determine further combinations of Chern numbers for $Z_n$ by looking at the terms of lower order in $r$. The
coefficients of $r^{2n}$ give no new information, but provide a consistency check for the calculation of
$c_1^{2n+1}(Z_n)$. Combining this calculation with the comparison of the coefficients of $r^{2n-1}$, we find
\begin{equation}\label{eq:SW2}
c_1^{2n-1}c_2(Z_n) = 4(n^2+n+1)(n+1)^{2n-1}{2n-1 \choose n} \ .
\end{equation}
This agrees with~\eqref{eq:dualc1c2}.

One could calculate some more Chern numbers by looking at the further terms in the expansions, but this would not 
be enough to compute all the Chern numbers of $Z_n$.

\section{Chern numbers of the nearly K\"ahler structure}\label{ss:Chernnearly}

We denote by $N_n$ the smooth manifold underlying $F_n$ and $Z_n$, but endowed with the non-integrable almost
complex structure $\hat J$ that is part of the nearly K\"ahler structure defined at the end of Section~\ref{s:cx}.
The Chern classes of $N_n$ are given by the following:
\begin{prop}\label{nearlytcc}
In the generators $x$ and $z$ from Proposition~\ref{p:dualcoho}, the total Chern class of $N_n$ is
\begin{equation}
c(N_n) = c(Z_n)\cdot \frac{1-z}{1+z} = \frac{(1+x)^{n+2}(1-x+z)^{n+2}(1-z)}{(1+z)^2} \ .
\end{equation}
\end{prop}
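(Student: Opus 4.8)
The plan is to reduce everything to the complex-vector-bundle isomorphisms established at the end of Section~\ref{s:cx}, so that the formula for $c(N_n)$ follows from the already-known total Chern class of $Z_n$ in Proposition~\ref{dualtcc}. Recall that, disregarding holomorphic structures, one has $TZ_n \cong L \oplus D$ for the twistor complex structure $J$, while the nearly K\"ahler structure $\hat J$ gives $(TZ_n, \hat J) \cong L^{-1} \oplus D$. Since the only difference between the two splittings is the replacement of the line bundle $L$ by its dual $L^{-1}$, with the common summand $D$, the Whitney sum formula immediately yields
$$
c(N_n) = c(L^{-1})\, c(D) = c(L^{-1})\,\frac{c(Z_n)}{c(L)} = c(Z_n)\,\frac{c(L^{-1})}{c(L)} = c(Z_n)\,\frac{1 - c_1(L)}{1 + c_1(L)} \ .
$$

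The one point that requires attention is the identification of $c_1(L)$ with the generator $z$. For this I would combine two facts: the contact relation $c_1(Z_n) = (n+1)c_1(L)$ recorded in Section~\ref{s:cx} (which applies because $\alpha$ identifies $T\pi$ with $L$), and the explicit computation $c_1(Z_n) = (n+1)z$ obtained by taking the degree-two part of the formula in Proposition~\ref{dualtcc}. Because $H^2(Z_n;\Z) \cong \Z^2$ is torsion-free, the equality $(n+1)c_1(L) = (n+1)z$ forces $c_1(L) = z$, hence $c(L) = 1+z$ and $c(L^{-1}) = 1-z$. Substituting these into the display above gives $c(N_n) = c(Z_n)\cdot \frac{1-z}{1+z}$, and inserting the formula of Proposition~\ref{dualtcc} for $c(Z_n)$ produces
$$
c(N_n) = \frac{(1+x)^{n+2}(1-x+z)^{n+2}(1-z)}{(1+z)^2} \ ,
$$
as claimed.

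There is no serious obstacle here: once the bundle isomorphisms of Section~\ref{s:cx} are in hand, the argument is essentially bookkeeping with the Whitney formula. The only place where care is needed is the sign and normalization of $c_1(L)$, i.e.\ making sure that it is $L$ itself, and not $L^{-1}$, that appears with a $+z$ in the twistor Chern class. This is exactly what the consistency of the positive class $c_1(Z_n) = (n+1)z$ (reflecting the ampleness of $L$) with the contact relation guarantees. To be safe I would also cross-check the orientation conventions against the Remark following Proposition~\ref{dualtcc}, which records how $z$ relates to the class $-y$ used for $F_n$, so that the same generators are being used consistently across the two complex structures.
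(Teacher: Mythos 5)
Your proposal is correct and is essentially the paper's own proof: both use the bundle isomorphisms $TZ_n \cong L \oplus D$, $TN_n \cong L^{-1} \oplus D$ together with the Whitney formula, and both pin down $c_1(L)=z$ by comparing $(n+1)c_1(L)=c_1(Z_n)=(n+1)z$ and invoking torsion-freeness of the cohomology. The only cosmetic difference is that you phrase the step through $c(D)=c(Z_n)/c(L)$, while the paper writes the two products $c(Z_n)=(1+c_1(L))c(D)$ and $c(N_n)=(1-c_1(L))c(D)$ directly and takes their quotient.
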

\begin{proof}
The second equality follows from Proposition~\ref{dualtcc}. To prove the first equality, recall from Section~\ref{s:cx}
that as complex vector bundles we have $TZ_n=L\oplus D$ and $TN_n=L^{-1}\oplus D$. Thus, for the total Chern
classes we find
$$
c(Z_n)=(1+c_1(L))\cdot c(D) \quad \quad \textrm{and}  \quad \quad c(N_n)=(1-c_1(L))\cdot c(D) \ .
$$
Furthermore, we have $(n+1)z=c_1(Z_n)=(n+1)c_1(L)$. As the cohomology of $Z_n$ is torsion-free, we conclude
$c_1(L)=z$, which completes the proof.
\end{proof}
Combining Propositions~\ref{p:dualcoho} and~\ref{nearlytcc}, one can calculate all the Chern numbers of $N_n$. 
This gives results like the following:
\begin{thm}\label{t:nearlyC}
For the nearly K\"ahler manifold $N_n$ we have
\begin{equation}\label{eq:nearlyc1}
c_1^{2n+1}(N_n) = -(n-1)^{2n+1}  {2n+2 \choose n+1} \ ,
\end{equation}
\begin{equation}\label{eq:nearlyc1c2}
c_1^{2n-1}c_2(N_n) = -4 (n-1)^{2n-1} (n^2-n-1) {2n-1 \choose n+1} \ ,
\end{equation}
\begin{equation}\label{eq:nearlyc1c22}
c_1^{2n-3}c_2^2(N_n) = -\frac{1}{n+1} \cdot (n-1)^{2n-3} (4n^5-20n^4+34n^3-17n^2-11n+16){2n-2 \choose n-1} \ ,
\end{equation}
\begin{equation}\label{eq:nearlyc1c2n}
c_1c_{2n}(N_n)=(n-1)^2(n+1)(n+2) \ .
\end{equation}
\end{thm}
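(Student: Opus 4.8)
The plan is to read off the low Chern classes of $N_n$ from the generating function in Proposition~\ref{nearlytcc}, reduce each relevant product to a multiple of the top generator $x^{n+1}z^n$ using the relations~\eqref{eq:dualexp}, and then evaluate on the fundamental class. The one conceptual point is orientation: since $(TN_n,\hat J)=L^{-1}\oplus D$ differs from $(TZ_n,J)=L\oplus D$ only by conjugating the complex structure on the real $2$-plane bundle underlying $L$, the structure $\hat J$ induces the \emph{opposite} orientation to the complex structure of $Z_n$. Consequently $\langle x^{n+1}z^n,[N_n]\rangle=-1$, whereas the computation behind Theorem~\ref{t:dualC} used $\langle x^{n+1}z^n,[Z_n]\rangle=+1$. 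This sign is exactly what produces the overall minus signs in~\eqref{eq:nearlyc1}, \eqref{eq:nearlyc1c2} and~\eqref{eq:nearlyc1c22}, and it must be inserted by hand since the same generators $x,z$ are used for both structures.

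Extracting the degree-$2$ and degree-$4$ parts of $c(N_n)=c(Z_n)\cdot\frac{1-z}{1+z}$, or equivalently expanding Proposition~\ref{nearlytcc} directly, I find
$$
c_1(N_n)=(n-1)z,\qquad c_2(N_n)=-(n+2)x^2+(n+2)xz+\tfrac{n(n-3)}{2}z^2 \ .
$$
For~\eqref{eq:nearlyc1}, \eqref{eq:nearlyc1c2} and~\eqref{eq:nearlyc1c22} I would then expand $c_1^{2n+1}$, $c_1^{2n-1}c_2$ and $c_1^{2n-3}c_2^2$ as polynomials in $x$ and $z$, so that every monomial has the form $x^a z^{2n+1-a}$; those with $a>n+1$ vanish, and the rest collapse to $x^{n+1}z^n$ via~\eqref{eq:dualexp}, turning each Chern number into a short alternating sum of binomial coefficients (times the sign $-1$ above). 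For instance $c_1^{2n+1}(N_n)=(n-1)^{2n+1}z^{2n+1}=(n-1)^{2n+1}{2n+2 \choose n+1}x^{n+1}z^n$, which pairs with $[N_n]$ to give~\eqref{eq:nearlyc1} at once. The other two run the same way; the only real labour is consolidating the resulting combinations of binomial coefficients into the stated closed forms, which I expect to be the most tedious step.

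For~\eqref{eq:nearlyc1c2n} the low-degree truncation is unavailable, since $c_{2n}$ is a \emph{high} Chern class, so I would instead exploit the bundle splitting from Section~\ref{s:cx} together with the value of $c_1c_{2n}(Z_n)$ already supplied by Theorem~\ref{t:BH}. Writing $d_1,\dots,d_{2n}$ for the Chern roots of $D$ and comparing elementary symmetric functions of the roots, the splittings $TZ_n=L\oplus D$ and $TN_n=L^{-1}\oplus D$ give, as cohomology classes,
$$
c_{2n}(N_n)=2c_{2n}(D)-c_{2n}(Z_n),\qquad z\,c_{2n}(D)=c_{2n+1}(Z_n) \ ,
$$
whence $z\,c_{2n}(N_n)=2c_{2n+1}(Z_n)-z\,c_{2n}(Z_n)$. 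Multiplying by $c_1(N_n)=(n-1)z$ and evaluating against $[N_n]=-[Z_n]$ reduces $c_1c_{2n}(N_n)$ to two known numbers on $Z_n$, namely the Euler number $\langle c_{2n+1}(Z_n),[Z_n]\rangle=(n+1)(n+2)$ and $\langle z\,c_{2n}(Z_n),[Z_n]\rangle=\frac{1}{n+1}c_1c_{2n}(Z_n)=(n+1)^2(n+2)$, the latter by~\eqref{eq:c1c2n}. A one-line simplification of $-(n-1)\bigl[2(n+1)(n+2)-(n+1)^2(n+2)\bigr]$ then yields $(n-1)^2(n+1)(n+2)$, which is~\eqref{eq:nearlyc1c2n}. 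The main obstacle is thus not any single formula but keeping the orientation sign consistent throughout and carrying out the binomial bookkeeping for the first three identities; the fourth, by contrast, is handled cleanly once one notices that it can be routed through the already-computed Kähler invariant $c_1c_{2n}(Z_n)$ rather than through the unwieldy class $c_{2n}(N_n)$ itself.
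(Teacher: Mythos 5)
Your proposal is correct and follows essentially the same route as the paper's own proof: reading $c_1(N_n)=(n-1)z$ and $c_2(N_n)$ off Proposition~\ref{nearlytcc}, reducing via the relations~\eqref{eq:dualexp} with the crucial orientation sign $[N_n]=-[Z_n]$, and handling~\eqref{eq:nearlyc1c2n} through the splittings $TZ_n=L\oplus D$, $TN_n=L^{-1}\oplus D$ together with the Euler number $c_{2n+1}(Z_n)=(n+1)(n+2)$ and the K\"ahler value~\eqref{eq:c1c2n}. Your elimination of $c_{2n}(D)$ via $z\,c_{2n}(D)=c_{2n+1}(Z_n)$ is only a cosmetic rearrangement of the paper's extraction of $\langle c_1^2(L)c_{2n-1}(D),[Z_n]\rangle$, so the two arguments coincide in substance.
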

\begin{proof}
The previous Proposition gives in particular $c_1(N_n)=(n-1)z$ and 
$$
c_2(N_n) = -(n+2)x^2+(n+2)xz+ \frac{1}{2}n(n-3)z^2 \ .
$$
From this one computes~\eqref{eq:nearlyc1}, \eqref{eq:nearlyc1c2} and~\eqref{eq:nearlyc1c22} mechanically using the 
relations~\eqref{eq:dualexp}. The only new feature is that the almost 
complex structure $\hat J$ of $N_n$ induces the orientation opposite to the one induced by the complex structure of the 
twistor space $Z_n$. Therefore $x^{n+1}z^n$ is now the negative rather than the positive generator of the top degree 
cohomology. 

One can also use the argument from the proof of Proposition~\ref{nearlytcc} to calculate Chern 
numbers of $N_n$ from those of the twistor space $Z_n$. We use this approach to prove~\eqref{eq:nearlyc1c2n}.

Recall from Section~\ref{s:cx} that as complex vector bundles $TZ_n=L\oplus D$ and $TN_n=L^{-1}\oplus D$, 
and that $c_1(Z_n)=(n+1)c_1(L)$ and $c_1(N_n)=(n-1)c_1(L)$. For the Chern classes $c_1c_{2n}$ this means 
$$
c_1c_{2n}(N_n)= (n-1)(c_1(L)c_{2n}(D)-c_1^2(L)c_{2n-1}(D))
$$
and
$$
c_1c_{2n}(Z_n)= (n+1)(c_1(L)c_{2n}(D)+c_1^2(L)c_{2n-1}(D)) \ .
$$
Evaluating on the fundamental class of $Z_n$, the second equation gives the following relation between Chern numbers:
$$
(n+1)^3(n+2) = (n+1)((n+1)(n+2)+\langle c_1^2(L)c_{2n-1}(D),[Z_n]\rangle ) \ ,
$$
where we have used~\eqref{eq:c1c2n} on the left hand side, and we have used the known value for the 
top Chern number $c_{2n+1}(Z_n)$ to identify the first term on the right hand side. Now we can similarly
evaluate the first equation on $[N_n]=-[Z_n]$ and plug in what we just computed for the evaluation of 
$c_1^2(L)c_{2n-1}(D)$ to obtain~\eqref{eq:nearlyc1c2n}.
\end{proof}

\begin{ex}
For $n=2$ Theorem~\ref{t:nearlyC} gives $c_1^5(N_2)=-20$, $c_1^3c_2(N_2)=c_1c_2^2(N_2)=-4$ and $c_1c_4(N_2)=12$.
In this case we can easily extract $c_3(N_2)$ from the formula in Proposition~\ref{nearlytcc} and 
carry out the multiplication in the cohomology ring to prove $c_1^2c_3(N_2)=20$ and $c_2c_3(N_2)=4$.
\end{ex}

\begin{ex}
The case $n=3$ is also fairly easy because the formula for $c_2$ simplifies to $c_2(N_3)=-5(x^2-xz)$. This immediately 
yields $c_1c_2^3(N_3)=-500$, in addition to  the values already given by the theorem. Still, to calculate all the Chern 
numbers more work is needed. From Proposition~\ref{nearlytcc}, together  with our calculation of the Chern classes 
of $Z_3$, we find the following:
\begin{alignat*}{1}
c_3(N_3) &= -5x^2z+5xz^2-2z^3 \ , \\
c_4(N_3) &= 5x^4-10x^3z+5x^2z^2-2z^4 \ , \\
c_5(N_3) &= 2(30x^4z-35x^3z^2+25x^2z^3-10xz^4+z^5) \ .
\end{alignat*}
This leads to the numbers given in the third column of the table in Figure~\ref{table3}.
\end{ex}

\section{Final remarks}\label{s:final}

In this section we explain the relationship between the different points of view on the almost Hermitian structures that 
we have discussed. 

First of all, the holomorphic tangent and cotangent bundles of $\C P^{n+1}$ are homogeneous bundles under 
$SU(n+2)$, and therefore the complex structures of their projectivizations, denoted $F_n$ and $Z_n$ in 
Sections~\ref{s:cx} to~\ref{ss:Cherntwistor}, are also homogeneous under $SU(n+2)$. Thus, up to conjugation, 
they must equal the invariant complex structures $I$ and $J$ in Proposition~\ref{roots}, but {\it a priori} it is not 
clear which is which, and the case $n=1$ shows that distinguishing between the two is not an entirely trivial matter. 
By looking at the Chern classes we can however immediately say that the standard complex structure $F_n$ is $I$ 
and the twistor space structure $Z_n$ is $J$. This follows most easily by looking at the divisibilities of $c_1$. On the 
projectivized cotangent bundle the divisibility is a multiple of $n+1$ due to the presence of a holomorphic contact 
structure. This fits with the formula for $c_1(J)$ in Example~\ref{ex:c1}, but not with $c_1(I)$.

The fibration of $F_n$ over the Grassmannian $G_n$ is a homogeneous fibration, and the tangent bundle along
the fibers is given by the two-dimensional irreducible subrepresentation $R_0$ of the isotropy representation of 
$F_n$, compare Subsection~\ref{ABH}. In the definition of the nearly K\"ahler structure $\hat J$ in Section~\ref{s:cx}
we started with the complex structure of the twistor space and conjugated it along the fiber of the twistor fibration.
This matches precisely the relationship between the $J$ and $\hat J$ in Proposition~\ref{roots},
which coincide on $R_1$ and $R_2$ but are conjugate to each other on $R_0$. Thus the $\hat J$ of Section~\ref{s:cx}
is the same as the homogeneous $\hat J$ of Proposition~\ref{roots} in Section~\ref{s:Lie}.

As the fibration of $F_n$ over the Grassmannian $G_n$ is homogeneous, with the tangent bundle along the fibers
corresponding to a subrepresentation of  the isotropy representation, we can modify any homogeneous metric on 
the total space of the fibration by constant rescaling along the fibers leaving the orthogonal complement unchanged,
and the resulting metric will still be homogeneous. This just means that on the summand $R_0$ of the 
isotropy representation we change the metric by multiplication with a constant. Therefore the nearly K\"ahler metric 
$\hat g$ defined in Section~\ref{s:cx} is homogeneous. 

This scaling procedure can be applied to any Riemannian submersion with totally geodesic fibers, and is sometimes 
called the canonical variation of the submersion metric, see~\cite{Besse}, 9G. It is a standard way to build new
Einstein metrics from old ones. For the twistor fibration of $Z_n$ over $G_n$ one has a K\"ahler--Einstein metric 
on $Z_n$, and its canonical variation contains another Einstein but non-K\"ahler metric, see~\cite{AGI,Besse}. This 
Einstein metric is also homogeneous, and coincides with the nearly K\"ahler metric if and only if $n=1$, as one sees
by comparing Theorem~3.1 and Proposition~3.2 of~\cite{AGI}.

\section*{Appendix: Chern numbers for $n=3$}

The table in Figure~\ref{table3} summarizes our calculations of the Chern numbers for the case $n=3$. Up to
complex conjugation, the three columns correspond to the almost complex structures $I$, $J$ and $\hat J$ from 
Section~\ref{s:Lie}. These were denoted by $F_n$ (standard structure), $Z_n$ and $N_n$ in later sections.

\bigskip

\begin{figure}
{
\begin{tabular}{|l|c|c|c|} \hline
& & & \\ 
& \raisebox{1.5ex}[-1.5ex]{standard structure $\PP (T\C P^4)$} & \raisebox{1.5ex}[-1.5ex]{twistor space $\PP (T^{*}\C P^4)$} & \raisebox{1.5ex}[-1.5ex]{nearly K\"ahler structure} \\ \hline\hline
{\rule [-3mm]{0mm}{8mm}$c_1^7$ } 
& $967680$ & $1146880$ & $-8960$  \\ \hline
{\rule [-3mm]{0mm}{8mm}$c_1^5c_2$ } 
& $458880$ & $532480$ & $-3200$  \\ \hline
{\rule [-3mm]{0mm}{8mm}$c_1^3c_2^2$ } 
& $217680$ & $247680$ & $-1200$ \\ \hline
{\rule [-3mm]{0mm}{8mm}$c_1c_2^3$ } 
& $103330$ & $115480$ & $-500$ \\ \hline
{\rule [-3mm]{0mm}{8mm}$c_1^4c_3$ } 
& $134080$ & $148480$ & $640$ \\ \hline
{\rule [-3mm]{0mm}{8mm}$c_1^2c_2c_3$ } 
& $63580$ & $69280$ & $200$ \\ \hline
{\rule [-3mm]{0mm}{8mm}$c_1c_3^2$ } 
& $18530$ & $19480$ & $-60$ \\ \hline
{\rule [-3mm]{0mm}{8mm}$c_2^2c_3$ } 
& $30180$ & $32430$ & $50$ \\ \hline
{\rule [-3mm]{0mm}{8mm}$c_1^3c_4$ } 
& $26320$ & $27520$ & $880$ \\ \hline
{\rule [-3mm]{0mm}{8mm}$c_1c_2c_4$ } 
& $12470$ & $12920$ & $300$ \\ \hline
{\rule [-3mm]{0mm}{8mm}$c_3c_4$ } 
& $3620$ & $3670$ & $-70$ \\ \hline
{\rule [-3mm]{0mm}{8mm}$c_1^2c_5$ } 
& $3520$ & $3520$ & $400$ \\ \hline
{\rule [-3mm]{0mm}{8mm}$c_2c_5$ } 
& $1670$ & $1670$ & $150$ \\ \hline
{\rule [-3mm]{0mm}{8mm}$c_1c_6$ } 
& $320$ &  $320$ & $80$ \\ \hline
{\rule [-3mm]{0mm}{8mm}$c_7$ } 
& $20$ & $20$ & $20$ \\ \hline
\end{tabular}
}
\caption{The Chern numbers for the invariant almost Hermitian structures of $F_3$}\label{table3}
\end{figure}

\bigskip

\newpage

\bibliographystyle{amsplain}

\bigskip

\end{document}